\crefname{hypothesis}{Hypothesis}{Hypotheses}
\title{Multivariate Polynomial Regression of Euclidean Degree Extends the Stability for Fast Approximations of Trefethen Functions\thanks{
\funding{This work was partially funded by the Center of Advanced Systems Understanding (CASUS), financed by Germany's Federal Ministry of Education and Research (BMBF) and by the Saxon Ministry for Science, Culture and Tourism (SMWK) with tax funds on the basis of the budget approved by the Saxon State Parliament.}}}
\author{Sachin Krishnan Thekke Veettil\thanks{Technische Universit\"{a}t Dresden,
Faculty of Computer Science, Dresden, Germany.\\
\indent \indent Max Planck Institute of Molecular Cell Biology and Genetics, Dresden, Germany.\\
\indent \indent Center for Systems Biology Dresden, Dresden Germany
}
\and Yuxi Zheng
\and Uwe Hernandez Acosta\footnotemark[3]
\and Damar Wicaksono \thanks{Center for Advanced Systems Understanding (CASUS), G\"{o}rlitz, Germany.}
\and Michael Hecht\footnotemark[3] \thanks{Corresponding author. Email: m.hecht@hzdr.de}}
\newcommand{\R}{\mathbb{R}}
\newcommand{\C}{\mathbb{C}}
\newcommand{\N}{\mathbb{N}}
\newcommand{\Cheb}{\mathrm{Cheb}}
\newcommand{\Oc}{\mathcal{O}}
\newcommand{\ee}{\varepsilon}
\newcommand{\lo}{\longrightarrow}
\newcommand{\li}{\left}
\newcommand{\re}{\right}
\newcommand{\rank}{\mathrm{rank}}
\newcommand{\cond}{\mathrm{cond}}
\newtheorem{experiment}{\sc Experiment}
{\bf}{\it}
\begin{document}

\maketitle

\begin{abstract}
  We address classic multivariate polynomial regression tasks from a novel perspective resting on the notion of general polynomial $l_p$-degree, with total, Euclidean, and maximum degree being the centre of considerations. While ensuring stability is a theoretically known and empirically observable limitation of any computational scheme seeking for fast function approximation,
  we show that choosing Euclidean degree resists the instability phenomenon best. Especially, for a class of analytic functions, we termed Trefethen functions, we extend recent argumentations that suggest this result to be genuine. We complement the novel regression scheme, presented herein, by an adaptive domain decomposition approach
  that extends the stability for fast function approximation even further.
\end{abstract}

\begin{keywords}
Newton-Lagrange regression, multivariate approximation, stability, Runge's phenomenon
\end{keywords}

\begin{MSCcodes}
65D05, 65D99, 65L07
\end{MSCcodes}

\section{Introduction}

Classic $1$-dimensional (1D) polynomial interpolation goes back to Newton, Lagrange, and others, see, e.g.,\cite{LIP}. Its generalisation to regression tasks was
mainly proposed and developed by Gau\ss, Markov, and Gergonne \cite{gergonne1974application,stigler1974gergonne}
and is omnipresent in mathematics and computing till today.

Given a continuous multivariate function $f : \Omega \lo \R$, on the $m$-dimensional hypercube $\Omega = [-1,1]^m$, $m \in \N$ a set of data points $P \subseteq \Omega$, function values $F = (f(p))_{p \in P} \in \R^{|P|}$ and a polynomial basis $\{q_\alpha\}_{\alpha \in A}$, $A\subseteq \N^m$, e.g. the canonical $q_\alpha = x^\alpha =x_1^{\alpha_1}\cdots x_m^{\alpha_m}$,
we consider the case of regular regression tasks, deriving the polynomial coefficients $C=(c_\alpha)_{\alpha \in A} \in \R^{|A|}$
in the classic least square sense
\begin{equation}\label{eq:REG}
  C =\mathrm{argmin}_{X \in \R^{|A|}}\|R_{A}X-F\|^2\,.
\end{equation}
The regression matrix  $R_A =(q_{\alpha}(P_i))_{i=1,\dots,|P|, \alpha \in A}\in \R^{|P| \times |A|}$ (see section~\ref{sec:NL_REG}) is assumed to be of full $\rank \, R_A =|A| \leq |P|$, and the multi-index set $A \subseteq \N^m$ generalises the notion of 1D polynomial degree to multivariate $l_p$-degree, i.e,
\begin{equation}\label{eq:LP}
  A = A_{m,n,p}:=   \{\alpha \in \N^m : \|\alpha\|_p \leq n\}\,, \quad p >0\,.
\end{equation}
The particular cases of \emph{total degree} $A = A_{m,n,1}$, \emph{Euclidean degree} $A= A_{m,n,2}$, and \emph{maximum degree} $A= A_{m,n,\infty}$
play a crucial role for the polynomial approximation power. Due to \cite{Lloyd2} (see also section~\ref{sec:Notation}), the sizes of the sets scale polynomially, sub-exponentially, and exponentially with dimension, respectively:
\begin{equation}\label{eq:size}
  |A_{m,n,1}| = \binom{m+n}{n} \in \Oc(m^n)\,, \,\,\, |A_{m,n,2}|\approx \frac{(n+1)^m }{\sqrt{\pi m}} \li(\frac{\pi \mathrm{e}}{2m}\re)^{m/2} \in o(n^m)\,, \,\,\, |A_{m,n,\infty}| = (n+1)^m\,.
\end{equation}

The setup in Eq.~\eqref{eq:REG} is a classic for the total or maximum degree choices. Motivated by recent works of \cite{Lloyd2,converse,MIP}, we theoretically and empirically show that for regular functions including a class of analytic functions we term \emph{Trefethen functions} these choices are sub-optimal compared to the Euclidean degree.

\subsection{Trefethen functions}\label{sec:TREF}

We term the class of continuous functions $f \in T(\Omega,\R) \subseteq C^0(\Omega,\R)$ that possess an absolute convergent Chebyshev series expansion on $\Omega$, \linebreak
$f  =\sum_{\alpha \in \N^m}c_\alpha \prod T_{\alpha_i}\in \Pi_{A_{m,n,p}}$, \emph{Trefethen functions} if $f$ can be \emph{analytically  extended}
to the (open, unbounded) \emph{Trefethen domain}
of radius $\rho = h + \sqrt{1 + h^2}$, $h>0$
\begin{equation}\label{Tdomain}
   N_{m,\rho} = \li\{ (z_1,\dots,z_m) \in \C^m :  (z_1^2 + \cdots  + z_m^2) \in E_{m,h^2}^2 \re\}\,.
 \end{equation}
 Here,
 $E_{m,h^2}^2$ denotes the \emph{Newton ellipse} with foci $0$ and $m$ and leftmost point $-h^2$.
 A prominent example of a Trefethen function is given by the \emph{Runge function}, $f(x) = 1/(1 + r\|x\|^2)$, $r >1$, \cite{runge}.

In \cite{Lloyd2} Trefethen proved an upper bound on the convergence rate for truncating the Trefethen function
$\mathcal{T}_{A_{m,n,p}}(f)  =\sum_{\alpha \in A_{m,n,p}}c_\alpha \prod T_{\alpha_i}\in \Pi_{A_{m,n,p}}$
to the polynomial space $\Pi_{A_{m,n,p}}$:
\begin{equation}\label{Rate}
  \| f - \mathcal{T}_{A_{m,n,p}}(f)\|_{C^0(\Omega)}  = \li\{\begin{array}{ll}
                                                                      \Oc_\ee(\rho^{-n/\sqrt{m}})  &\,, \quad p =1 \\
                                                                      \Oc_\ee(\rho^{-n}) &\,, \quad p =2\\
                                                                      \Oc_\ee(\rho^{-n})  &\,, \quad p =\infty
                                                                      \end{array}
\re.\,,
\end{equation}
where  $g \in \Oc_\ee(\rho^{-n})$ if and only if $g \in \Oc((\rho-\ee)^{-n})$ $\forall \ee >0$.

This suggests that interpolation with respect to \emph{Euclidean $l_2$-degree} can reach faster convergence rates than with \emph{total $l_1$-degree}, and performs compatibly with the maximum degree.
This has been validated and argued to be genuine~\cite{Lloyd2,converse,MIP}. Here, we adapt the non-tensorial interpolation scheme \cite{MIP,minterpy} for regression tasks.
Compared to the choice of $l_\infty$-degree, the resulting smaller
regression matrices $R_{A,P} \in \R^{|P| \times |A|}$ given in Eq.~\eqref{eq:size}, Eq.~\eqref{eq:RA}, do not only lower storage amount and runtime costs, but more crucially are
better conditioned, which is the essential limiting factor for regression schemes.


\subsection{The range of fast and stable approximations - Theoretical limitations of regression schemes}
\label{sec:limit}

Platte et al. \cite{platte:2011} have proven that regardless of the choice of the (non-polynomial) basis functions or approximation method,
 guaranteeing stability (sub-exponen- tially growing condition numbers $\cond (R_{A,P}) \in o(r^n)$, $r>1$) of an approximation scheme with fast (exponential approximation rates) for analytic functions on equispaced data is impossible. In other words: Any regression scheme that seeks for fast approximations of analytic functions $ f: \Omega \longrightarrow \R$ sampled on
 equispaced grids $G \subseteq \Omega$ comes with the cost of becoming rapidly unstable. Moreover, approximation in equispaced grids is highly sensitive to Runge's and Gibbs' phenomenon \cite{runge,hewitt1979gibbs,dimarogonas1996vibration},
demanding a high-order function expansion in order to be suppressed.

While regression tasks for functions sampled on regular, equispaced grids $G \subseteq \Omega$ is very often appearing in practice, even samples from
"real world" experiments or
other (random) sampling choices can not be expected to circumvent the instability phenomenon in general. Especially for equispaced data,
the range of fast and stable (least square) regression is limited to a class of well-behaving functions $T(\Omega,\R)\subseteq C^0(\Omega,\R)$.

Contributing to broaden the function space $T(\Omega,\R)\subseteq C^0(\Omega,\R)$ for which a close (machine precision) approximation can be reached is the scope of this article.

%
%
%
%
%
%

\subsection{Contribution}\label{sec:limits}
Based on our recent results on multivariate interpolation \cite{PIP1,PIP2,MIP,IEEE} we, here, deliver three essential contributions:

\begin{enumerate}
  \item[i)] \emph{Newton-Lagrange regression of general $l_p$-degree:} We propose a novel polynomial regression scheme that rests on our recent extension of classic 1D  Newton-Lagrange interpolation for multi-dimensional, non-tensorial, unisolvent interpolation nodes $P_A = \{p_\alpha\}_{\alpha \in A} \subseteq \Omega$ (generated by Leja ordered Chebyshev-Lobatto nodes, Definition~\ref{def:EA}) with respect to general $l_p$-degree, i.e.,  $A = A_{m,n,p}$, $p >0 $. The resulting regression matrix $R_A$, appearing in Eq.~\eqref{eq:REG}, is  generated by evaluating  a non-tensorial, polynomial Lagrange basis $\{L_{\alpha}\}_{\alpha \in A}$, $L_{\alpha}(p_\beta) = \delta_{\alpha,\beta}$,
 $ \forall \alpha, \beta \in A$, $p_\beta \in P_A$ in the data points $P \subseteq \Omega$, i.e.,
  \begin{equation}\label{eq:RA}
  R_{A,P} = \big (L_{\alpha}(p_i)\big)_{i= 1,\ldots,|P|, \alpha \in A } \in \R^{|P|\times |A|}\,.
  \end{equation}
   Compared to the tensorial, $l_\infty$-degree choices, on top of memory  and runtime reductions given by the smaller size of $R_{A,P}$, demonstrations in Section~\ref{sec:Num}
   validate that the Euclidean degree regression significantly extends the stability range without loosing approximation power, as it is the case for the total $l_1$-degree regression.
  \item[ii)] \emph{Approximation theory:} In Theorem~\ref{theo:APP} we rephrase the classic Lebesgue inequality of polynomial regression, see, e.g.,\cite{brutman2} by relating the fast approximation rates
  $$\|f - Q_{f,A_{m,n,p}}\|_{C^0(\Omega)} = \Oc(\rho^{-n})\,, \quad  \rho >1\,,$$
   that can be reached for the (Euclidean ) $l_p$-degree \emph{interpolation of Trefethen functions} in proper interpolation nodes $P_A\subseteq \Omega$ proposed in our former work \cite{MIP}.
   That is, we estimate the approximation error of the regressor $Q_{f,P,A}$ of $f$ in $P\subseteq \Omega$ by the approximation error of the (unknown)
   interpolant $Q_{f,A}$:
\begin{equation*}
   \|f - Q_{f,P,A}\|_{C^0(\Omega)}  \leq (1+ \Lambda(P_A) \|S_{A,P}\|_\infty)\|f - Q_{f,A}\|_{C^0(\Omega)} + \mu\Lambda(P_A) \|S_{A,P}\|_\infty \,,
\end{equation*}
where $\Lambda(P_A)$ denotes the Lebesgue constant of the (non-tensorial) interpolation
\linebreak
nodes $P_A\subseteq \Omega$, $S_{A,P} \in \R^{|A|\times |P|}$ with $S_{A,P}R_{A,P}  = \mathrm{Id}_{\R^{|A| \times |A|}}$ the \emph{Moore–Penrose pseudo-left-inverse}, see, e.g.,\cite{ben2003,Lloyd_Num}, and $\mu = \max_{p \in P} | f(p) - Q_{f,P,A}(p)|$ the regression error.

The \emph{approximation factor} $\Lambda(P_A) \|S_{A,P}\|_\infty$ measures both, the approximation power and the stability of the regression scheme.
Especially, for equispaced data, the Euclidean $l_2$-degree factor scales similarly  to the one of the total $l_1$-degree, and is significantly smaller than the one appearing for the maximum $l_\infty$-degree (see Section~\ref{sec:Num}). Conversely,  the Euclidean degree interpolation achieves almost the same fast exponential rate as the $l_\infty$-degree interpolation can reach, Eq.~\eqref{Rate}. In combination, the performance of Euclidean degree regression is
superior to the other regressions.
\item[iii)] \emph{Adaptive regression:} We combine $i)$,$ii)$ and present an adaptive high order domain decomposition regression scheme that delivers a global polynomial approximation $Q_{f,A}$, $A = A_{m,n,p}$ of the function $f: \Omega \lo \R$, $Q_{f,A}\approx f$ on the whole domain $\Omega$ by merging the individual polynomials of each subdomain by global multivariate interpolation \cite{MIP}. The concept substantially differs from \cite{boyd2011exponentially}, where domain decomposition for interpolation schemes is presented and analysed. We further demonstrate that the globally merged polynomials $Q_{f,A}$
reach close approximations, being non-reachable  for global (Euclidean) regression while already running into instability, see Section~\ref{sec:Num}.

We want to emphasise that the flexibility of general $l_p$-degree-merging comes by our introduced regression scheme in $(i)$, but does not apply for the prominent tensorial ($l_\infty$-degree) Chebyshev-polynomial interpolation \cite{chebfun,Lloyd}.
 \end{enumerate}
In summary, we show that when considering multivariate polynomial regression tasks, choosing the Euclidean degree yields an approximation scheme that is more powerful than choosing total or maximum degree.
There is a large number of previous works that address the subject of stability for fast function approximation resting on non-polynomial basis functions or specific geometric constructions.
We mention some of the approaches that directly relate to or inspired our work.

\subsection{Related work}
An excellent overview of existing approaches addressing approximation tasks (on equispaced data) is given by \cite{boyd2009divergence,boyd2011exponentially,platte:2011}.
For approaches overcoming Gibb's phenomenon we recommend \cite{tadmor2007filters} as an excellent survey on the subject.
Here, we give a short extracted list:

1. \emph{Interpolation:} As discussed above global interpolation is highly sensitive to Runge's phenomenon and becomes rapidly unstable for equispaced data
\cite{runge,hewitt1979gibbs,schonhage1961fehlerfortpflanzung,turetskii1940bounding}.
By a conformal change of variables, better polynomial interpolants can be constructed that resist Runge's phenomenon in the interior $[-1+\ee,1-\ee]$, $\ee >0$ and localise it near the boundary
\cite{hale2008new,hale2009use}.

The concept also applies to conformal rational-function-interpolation proposed by Baltensperger, Berrut, and No{\"e}l \cite{baltensperger1999exponential} and has been extended to
\emph{Floater-Hormann interpolation} \cite{floater}. However,
for equispaced data, stability can not be guaranteed in general.

2. \emph{Regression:} While polynomial regression reduces Runge's phenomenon, it can not be entirely eliminated \cite{boyd2009divergence,rakhmanov2007bounds}. However, for
high-resolution grids, interpolation in \emph{Mock-Chebyshev-sub-grids} completely defeats Runge's phenomenon, but comes with the cost of achieving only root exponential rates $\Oc(r^{\sqrt{n}})$, $r>1$, $n \in \N$ \cite{stengle1989chebyshev,boyd2009divergence}.

3. \emph{Regularisation techniques:} By choosing a polynomial degree $|A_{m,n,p}| \gg |P|$, additional constraints, as proposed in the (unpublished) work of Platte or in  \cite{boyd1992defeating}, can be formulated to suppress Runge's phenomenon for the regressor $Q_{f,P,A}$ fitting a function $f$.
Alternatively, \cite{berzins2007adaptive}  extend the concept of non-oscillatory (ENO) schemes
\cite{harten1987uniformly} in a similar regime.

4. \emph{Adaptive approximation:} Based on \cite{hale2008new,hale2009use}, (exponentially fast) adaptive rational functions interpolation was developed by \cite{tee2006rational}, whereas
superpolynomial approximation rates can be reached for some functions by the adaptive rational function regression scheme of \cite{wang2010rational}.

5. \emph{Radial basis functions:} Approximations schemes resting on radial basis functions were developed and investigated in \cite{driscoll2002interpolation,wendland2005scattered,platte2005polynomials,platte2011fast,fornberg2007runge} and reach high accuracy, especially for scattered data.
Strategies for stability of a vast class of approximation tasks on equispaced data were given in \cite{fornberg2004stable,fornberg2008stable}.

6. \emph{Spline interpolation:} Algebraic approximation rates are achievable by the prominent \emph{spline interpolation} methods, mainly developed by \cite{Boor:BS,Boor:tensorSP,Boor:SP,Boor:wings}, defeating Runge's
phenomenon when increasing the mesh resolution.

We want to note that most of the approaches above address approximations for univariate or bivariate functions (1D or 2D). While extensions to higher dimensions
can straightforwardly be given (by tensorial $l_\infty$-degree formulations), for most of the techniques, general $l_p$-degree schemes do either not match to the setup of the approach or were yet rarely investigated and
realised, as it is done in this article.

Finally, we want to emphasise that Euclidean degree regression does not overcome the introduced limitations (Section~\ref{sec:limit}) as some of the approaches above do, but extends the stability of the regression in Eq.~\eqref{eq:REG}, yielding
more approximation power and consequently reaches a broader class of continuous functions $T(\Omega,\R)\subseteq C^0(\Omega,\R)$.
Our results suggest to consider Euclidean degree regression as baseline for advancing some of the mentioned techniques above, especially, in the case of  higher-dimensional regression tasks $\dim =m  \geq 3$
for equispaced data.

\subsection{Notation}\label{sec:Notation}
Let $m,n \in \N$, $p\geq 1$. Throughout this article, $\Omega=[-1,1]^m$ denotes the $m$-dimensional \emph{standard hypercube} and $C^0(\Omega,\R)$ the
\emph{Banach space}
of continuous functions
\linebreak
$f : \Omega \lo \R$ with norm $\|f\|_{C^0(\Omega)} = \sup_{x \in \Omega}|f(x)|$. Further, we denote by  $e_i \in \R^m$, $i =1,\dots,m$ the standard basis, by $\|\cdot\|_p$ the $l_p$-norm on $\R^m$, and by $\|M\|_p$ the $l_p$-norm
of a matrix $M\in \R^{m\times m}$.

Further, $A_{m,n,p} \subseteq \N^m$ denotes all multi-indices $\alpha =(\alpha_1,\dots,\alpha_m)\in \N^m$ with $\|\alpha\|_p \leq n$.
We order $A_{m,n,p}$ with respect to the lexicographical order $\preceq$ on $\N^m$ starting from last entry to the $1$st, e.g.,
$(5,3,1)\preceq (1,0,3) \preceq (1,1,3)$.
We call $A$
\emph{downward closed} if and only if there is no $\beta = (b_1,\dots,b_m) \in \N^m \setminus A$
with $b_i \leq a_i$, $ \forall \,  i=1,\dots,m$ for some $\alpha = (a_1,\dots,a_m) \in A$ \cite{cohen3}.
The sets $A_{m,n,p}$ are downward closed for all $m,n\in \N$, $p\geq 1$ and induce a generalised notion of \emph{polynomial $l_p$-degree} as follows:

We consider the \emph{real polynomial ring} $\R[x_1,\dots,x_m]$  in $m$ variables and denote by $\Pi_m$ the $\R$-\emph{vector space of all real polynomials} in $m$ variables.
For $A\subseteq \N^m$, $\Pi_{A} \subseteq \Pi_m$ denotes the \emph{polynomial subspace} $\Pi_A = \mathrm{span}\{x^\alpha\}_{\alpha \in A}$ spanned by the (unless further specified) \emph{canonical} (monomial) \emph{basis}. Choosing $A =A_{m,n,p}$ as in Eq.~\eqref{eq:size} yields the spaces
$\Pi_{A_{m,n,p}}$.
Given linear ordered sets $A \subseteq \N^m$, $B \subseteq \N^n$, we slightly abuse notation by writing matrices $R_{A,B}\in \R^{|A|\times |B|}$ as
\begin{equation}
 R_{A,B} = (r_{\alpha,\beta})_{\alpha\in A, \beta \in B} \in \R^{|A|\times|B|}\,,
\end{equation}
where $r_{\alpha,\beta} \in \R$ is the $\alpha$-th, $\beta$-th entry of $R_{A,B}$.
Finally, we use the standard \emph{Landau symbols} $f \in \Oc(g)  \Longleftrightarrow \lim \sup_{x\rightarrow \infty} \frac{|f(x)|}{|g(x)|} \leq \infty$,
$f \in o(g)  \Longleftrightarrow \lim_{x\rightarrow \infty} \frac{|f(x)|}{|g(x)|} =0$.

\section{Multivariate regression for downward closed multi-indices}

While 1D polynomial interpolation and regression is a classic \cite{LIP,ConteSamuelDaniel2017Ena:}. We, here, address its challenges in multi-dimensions.
To start with, we extract the essential ingredients from \cite{cohen2,cohen3,PIP1,PIP2,MIP,IEEE} on which our approach rests.

\subsection{The notion of unisolvence}
For a downward closed multi-index set $A \subseteq \N^m$, $m\in N$, and the induced polynomial space $\Pi_A$, a set of nodes $P \subseteq \Omega$ is called \emph{unisolvent with respect to $\Pi_A$}
if and only if there exists no hypersurface $H = Q^{-1}(0)$
generated by a polynomial $0\not =Q \in \Pi_A$ with $P \subseteq H$.
In fact, unisolvence ensures the uniqueness of the interpolant $Q_{f,A}(p_\alpha) = f(p_{\alpha})$, $Q_{f,A} \in \Pi_A$. The following notion, will be a crucial later on:

\begin{definition}[1$^\text{st}$  and 2$^\text{nd}$  essential assumptions]
\label{def:EA}
Let $m \in \N$, $A\subseteq \N^m$ be a downward closed set of multi-indices, and $\Pi_A \subseteq \Pi_m$ the polynomial sub-space induced by $A$.
We consider the generating nodes given by the grid
\begin{equation}\label{GP}
\mathrm{GP}= \oplus_{i=1}^m P_i\,, \quad P_i =\{p_{0,i},\dots,p_{n_i,i}\} \subseteq \R \,, \,\,\,n_i=\max_{\alpha \in A}(\alpha_i)\,,
\end{equation}
\begin{equation}\label{eq:UN}
   P_A = \li\{ (p_{\alpha_1,1}\,, \dots \,, p_{\alpha_m,m} ) : \alpha \in A\re\} \,.
 \end{equation}
\begin{enumerate}
  \item[A1)] If the $P_i \subseteq [-1,1]$ are arbitrary distinct points then the node set $P_A$
  is said to satisfy the \emph{1$^\text{st}$ essential assumption}.
  \item[A2)] We say that the \emph{2$^\text{nd}$ essential assumption} holds if in addition the $P_i$ are chosen as the Chebyshev-Lobatto nodes that, in addition, are Leja-ordered \cite{leja}, i.e,
  \begin{equation*}
  P_i =\{p_0,\dots,p_n\} = \pm \Cheb_n = \li\{ \cos\Big(\frac{k\pi}{n}\Big) : 0 \leq k \leq n\re\}
  \end{equation*}
  and the following holds
  \begin{equation}\label{LEJA}
   |p_0| = \max_{p \in P}|p|\,, \quad \prod_{i=0}^{j-1}|p_j-p_i| = \max_{j\leq k\leq m} \prod_{i=0}^{j-1}|p_k-p_i|\,,\quad 1 \leq j \leq n\,.
  \end{equation}
  Note that in the $p=\infty$ case the same nodes result regardless of the ordering of $\Cheb_n$.
\end{enumerate}
\end{definition}
Note that $\{p_0,p_1\} = \{-1,1\}$ for all Leja-ordered Chebyshev-Lobatto nodes $\Cheb_n$ with $n\geq 1$.
Points $P_A$ that fulfill the \emph{1$^\text{st}$ essential assumption}, $(A1)$ form non-tensorial (non-symmetric) grids and are unisolvent with respect to $\Pi_A$, \cite{cohen2,cohen3,PIP1,PIP2,MIP,IEEE}.
This allows generalising classic interpolation approaches to higher dimensions.

\subsection{Newton and Lagrange interpolation}

Given unisolvent nodes $P_A$, multivariate generalisations of the classic 1D Newton and Lagrange interpolation schemes, \cite{LIP}, were given in \cite{cohen2,cohen3,PIP1,PIP2,MIP,IEEE}, which we summarize here as follows.

\begin{definition}[Lagrange polynomials]Let $m \in \N$, $A\subseteq \N^m$ be a downward closed set of multi-indices, and $\Pi_A \subseteq \Pi_m$ be a set of unisolvent nodes satisfying
  $(A1)$ from Definition~\ref{def:EA}.
We define the \emph{multivariate Lagrange polynomials}  $L_\alpha \in \Pi_A$ by
\begin{equation}
  L_\alpha(p_\beta) = \delta_{\alpha,\beta}\,,
\end{equation}
where $\delta_{\cdot,\cdot}$ is the Kronecker delta.
\end{definition}

Since the $|A|$-many Lagrange polynomials are linearly independent functions, and
\linebreak
$\dim \Pi_A =|A|$, the Lagrange polynomials are a basis of $\Pi_A$. Consequently, for any function $f : \Omega \lo \R$, the unique interpolant $Q_{f,A} \in \Pi_A$ with $Q_{f,A}(p_{\alpha}) = f(p_{\alpha})$, $\forall \alpha \in A$ is given by
\begin{equation}\label{eq:LAG}
  Q_{f,A}(x) = \sum_{\alpha \in A}f(p_{\alpha})L_{\alpha}(x)\,, \quad x \in \R^m\,.
\end{equation}
However, while the Lagrange polynomials are rather a mathematical concept, this does not assert how to evaluate the interpolant $Q_{f,A}$ at an argument $x_0 \not \in P_A \subseteq \R^m$.
For that purpose we introduce:

\begin{definition}[Newton polynomials] Let $A\subseteq \N^m$ be a downward closed set and $P_A\subseteq \R^m$ fulfill $(A1)$ from Definition~\ref{def:EA}. Then,
the \emph{multivariate Newton polynomials} are given by
 \begin{equation}\label{Newt}
  N_\alpha(x) = \prod_{i=1}^m\prod_{j=0}^{\alpha_i-1}(x_i-p_{j,i}) \,, \quad \alpha \in A\,.
 \end{equation}
 \label{def:LagP}
\end{definition}
Indeed, in dimension $m=1$ the concepts above reduce to the classic definition of Lagrange and Newton polynomials~\cite[see e.g.][]{gautschi,Stoer,Lloyd}. Moreover, also the Newton polynomials are bases of $\Pi_A$ and the computation of the interpolant in Newton form as well as its evaluation and differentiation can be realised numerically accurately and efficiently, \cite{minterpy}.
\begin{remark}
 For $A= A_{m,n,\infty}$ the grid $P_A$ becomes tensorial when fulfilling $(A1)$ from Definition~\ref{def:EA}, and the above definition recovers the known  \emph{tensorial $m$D Lagrange interpolation}:
\begin{equation}\label{LagTens}
  L_{\alpha}(x)= \prod_{i=1}^m l_{\alpha_i,i} (x) \,, \quad l_{j,i} (x) = \prod_{h=0, h \not = j}^n \frac{x_i-p_{h,i}}{p_{j,i} - p_{h,i}}  \,,  \quad  1 \leq i,j\leq n\,, \alpha \in A
\end{equation}
where $p_{j,i} \in P_i$ as in Definition~\ref{def:EA}.
Using the Newton interpolation \cite{MIP,minterpy} with $f = L_{\alpha}$, an explicit formula for the Lagrange polynomials can be derived also in the  non-tensorial case, e.g., $A=A_{m,n,2}$, yielding
\begin{equation}\label{LN}
 L_{\alpha}(x) = \sum_{\beta \in A}c_\beta N_\beta(x)\,.
\end{equation}
\end{remark}

We extend the concept of Newton-Lagrange interpolation by setting up a regression scheme in the next section.

\subsection{Newton-Lagrange regression}
\label{sec:NL_REG}

We combine the present multivariate Newton and Lagrange interpolation schemes for realising the following regression scheme.

\begin{definition}\label{def:REG} Let  $A\subseteq \N^m$ be a downward closed set and $P_A\subseteq \R^m$
  fulfill $(A1)$ from Definition~\ref{def:EA}.
  Denote with
  $\{L_{\alpha}\}_{\alpha \in A}$ the corresponding Lagrange basis of $\Pi_A$ given in Newton form as in Eq.~\eqref{LN}. Let further $P =\{p_0,\dots,p_K\}\subseteq \Omega$ be any arbitrary set of points and
  $f : \Omega \lo \R$ be a continuous function. Then we consider the regression matrix $R_{A,P} \in \R^{|P|\times|A|}$ as in Eq.~\eqref{eq:RA}
  \begin{equation*}
    R_{A,P} = \big (L_{\alpha}(p_i)\big)_{i= 1,\ldots,|P|, \alpha \in A }\,.
  \end{equation*}
Assuming that $R_{A,P}$ has full $\rank R_{A,P} = |A|$ the solution $C \in \R^{A|}$ of the least square problem
\begin{equation}
  C=\mathop{argmin}_{X \in \R^{|A|}} \|F - R_{A,P}X\|^2\,, \quad F=(f(p_i))_{i=1,\ldots,|P|} \in \R^{|P|}
\end{equation}
is uniquely determined and we denote with
  \begin{equation}
    Q_{f,P,A}  = \sum_{\alpha \in A} c_\alpha L_{\alpha}\in \Pi_A \,, \quad C=(c_{\alpha})_{\alpha \in A} \in \R^{|A|}
  \end{equation}
the \emph{regressor} fitting $f$ in $P$ with respect to the chosen degree $A\subseteq \N^m$ and nodes $P_A\subseteq \Omega$.
\end{definition}

In the next section the approximation power of the regression scheme is investigated in regard of the initially introduced perspective.

\section{Approximation power of multivariate $l_p$-degree regression}
\label{sec:App}

The present Newton-Lagrange regression scheme derives polynomials that seek to approximate general continuous functions. Here, we
bound the approximation errors and theoretically argue when uniform convergence for regular functions can be guaranteed.

\subsection{Lebesgue constants and approximation errors}
The Lebesgue constant is a crucial key for estimating approximation errors of polynomial interpolants of general continuous functions.
We start by defining:

\begin{definition}[Lebesgue constant] \label{def:LEB} Let $m \in \N$, $A\subseteq \N^m$ be a downward closed set of multi-indices, $P_A\subseteq \Omega$ be a set of unisolvent nodes satisfying $(A1)$ from Definition~\ref{def:EA}.
Let  $f \in C^0(\Omega,\R)$ and $Q_{f,A}(x) = \sum_{\alpha \in A}f(p_\alpha)L_{\alpha}(x)$ be its Lagrange interpolant.
Then, we define the \emph{Lebesgue constant} analogously to the 1D case, see e.g. \cite{gautschi}, as
\begin{align*}
\Lambda(P_A) := \sup_{f\in C^0(\Omega,R)\,, \|f\|_{C^0(\Omega)}\leq 1} \|Q_{f,A}\|_{C^0(\Omega)}
             =  \Big\|\sum_{\alpha \in A} |L_{\alpha}|\Big\|_{C^0(\Omega)}\,.
\end{align*}
\end{definition}
Based on the 1D estimate
\begin{equation}\label{eq:LEB}
 \Lambda(\Cheb_n)=\frac{2}{\pi}\big(\log(n+1) + \gamma +  \log(8/\pi)\big) + \Oc(1/n^2)\,,
\end{equation}
known for Chebyshev-Lobatto nodes, surveyed by \cite{brutman2}, \cite{cohen2,cohen3,MIP} further detail and study this concept in $m$D and show
that unisolvent nodes satisfying $(A2)$ from Definition~\ref{def:EA} induce high approximation power reflected in the small corresponding  Lebesgue constants.

In the special case of multi-dimensional $l_\infty$-Chebyshev grids, the estimate extends to multi-dimension:
\begin{lemma}\label{lemma:log}
Let $A=A_{m,n,\infty}$, $m,n\in \N$, and $P_A$ be a full tensorial grid fulfilling $(A2)$ from Definition~\ref{def:EA}.
Then
\begin{equation*}\label{LE2}
  \Lambda(P_A) \leq \prod_{i=1}^m \Lambda(\Cheb_n) \in \Oc(\log(n+1)^m)\,.
\end{equation*}
\end{lemma}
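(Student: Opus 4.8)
The plan is to exploit the fact that in the maximum-degree case the whole construction tensorizes. Since $A = A_{m,n,\infty} = \{0,\dots,n\}^m$ is a full Cartesian product and $P_A$ is the corresponding tensorial grid, the multivariate Lagrange polynomials split into products of the 1D factors, exactly as recorded in Eq.~\eqref{LagTens}: $L_\alpha(x) = \prod_{i=1}^m l_{\alpha_i,i}(x_i)$, where $l_{j,i}$ is the 1D Lagrange polynomial attached to the node $p_{j,i} \in P_i$. Everything then reduces to organizing the Lebesgue function around this factorization.

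First I would factor the Lebesgue function itself. Because $A$ ranges over the full grid of multi-indices, distributing a product over a sum gives the identity $\sum_{\alpha \in A} |L_\alpha(x)| = \sum_{\alpha_1=0}^n \cdots \sum_{\alpha_m=0}^n \prod_{i=1}^m |l_{\alpha_i,i}(x_i)| = \prod_{i=1}^m \bigl( \sum_{j=0}^n |l_{j,i}(x_i)| \bigr)$, where the last equality is the elementary expansion $\prod_i \sum_j a_{ij}$. Each factor is a nonnegative function depending only on the single coordinate $x_i$.

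Next I would pass to the supremum over $\Omega = [-1,1]^m$. Since the factors are nonnegative and depend on disjoint coordinates, the supremum of the product separates into the product of the coordinatewise suprema: $\Lambda(P_A) = \sup_{x \in \Omega} \prod_{i=1}^m \bigl( \sum_{j=0}^n |l_{j,i}(x_i)| \bigr) = \prod_{i=1}^m \sup_{x_i \in [-1,1]} \sum_{j=0}^n |l_{j,i}(x_i)|$. Under assumption $(A2)$ each $P_i$ coincides with the Leja-ordered Chebyshev-Lobatto set $\Cheb_n$, and the ordering does not affect the unordered sum $\sum_j |l_{j,i}|$; hence each 1D supremum is precisely the 1D Lebesgue constant $\Lambda(\Cheb_n)$. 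This yields $\Lambda(P_A) = \prod_{i=1}^m \Lambda(\Cheb_n)$, which in particular establishes the claimed inequality (indeed an equality).

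Finally, the asymptotic bound follows by inserting the known 1D estimate Eq.~\eqref{eq:LEB}, which gives $\Lambda(\Cheb_n) \in \Oc(\log(n+1))$, so that the $m$-fold product lies in $\Oc(\log(n+1)^m)$. I do not expect a genuine obstacle here: the only point requiring care is the separation of the supremum over the product domain into a product of one-dimensional suprema, which is legitimate exactly because the factors are nonnegative and the coordinates are independent; the remaining steps are the tensorial factorization guaranteed by the $l_\infty$ structure and a direct appeal to the 1D result.
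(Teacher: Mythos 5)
Your proof is correct and follows essentially the same route as the paper: both exploit the tensorial factorization $L_\alpha(x)=\prod_{i=1}^m l_{\alpha_i,i}(x_i)$ from Eq.~\eqref{LagTens}, distribute the product over the sum, and reduce to the 1D Chebyshev--Lobatto estimate Eq.~\eqref{eq:LEB}. The only difference is cosmetic: you observe that the supremum of the nonnegative, coordinate-separated product factorizes exactly, giving equality, whereas the paper settles for the chain of inequalities $\Lambda(P_A)\leq\|\sum_\alpha|L_\alpha|\|_{C^0(\Omega)}\leq\prod_i\Lambda(P_i)$ -- which is all the lemma claims.
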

\begin{proof}We use the tensorial Lagrange polynomials $L_\alpha(x) = \prod_{i=1}^ml_{\alpha_i,i}(x_i)$ with $l_{j,i}$ given in Eq.~\eqref{LagTens}.
This allows us to bound
\begin{align}
\Lambda(P_{A_{m,n,\infty}}) &=  \sup_{f \in C^0(\Omega,\R)}\frac{\|Q_{f,A}\|_{C^0(\Omega)}}{\|f\|_{C^0(\Omega)}}
 \leq \big\|\sum_{\alpha \in A} |L_{\alpha}| \big\|_{C^0(\Omega)}   \nonumber  \\
  &\leq \big\| \sum_{\alpha \in A} \prod_{i=1}^m |l_{\alpha_i,i}| \big \|_{C^0(\Omega)}\\
  &=\big \| \big(\sum_{j=0}^n |l_{j,1}|\big) \cdots   \big(\sum_{j=0}^n |l_{j,l}|\big)\  \cdots \big(\sum_{j=0}^n |l_{j,m}|\big)\big\|_{C^0(\Omega)}\,, \,\,\, 1<l<m\nonumber \\
 &\leq \prod_{i=1}^m\big \| \sum_{j=0}^n |l_{j,i}| \big\|_{C^0(\Omega)}=  \prod_{i=1}^m \Lambda(P_i)\,. \nonumber
\end{align}
Due to Eq.~\eqref{eq:LEB}, this yields $\Lambda(P_A) \leq \Lambda(\Cheb_n)^m \in \Oc(\log(n+1)^m)$.
\end{proof}

\begin{remark}\label{rem:LEB} In the case of general $l_p$-degree, due to \cite{cohen3} the Lebesgue constants increase, i.e,
  \begin{equation}\label{eq:LI}
    \Lambda(P_{A_{m,n,p}}) \geq  \Lambda(P_{A_{m,n,\infty}})\,, \quad p > 0
  \end{equation}
  reflecting the less-constrained Lagrange polynomials, $L_{\alpha}(p_\beta) =0$ for all $\beta \in A_{m,n,p}\subsetneq A_{m,n,\infty}$, see
  Fig.~\ref{fig:L}.
  Thus, the approximation power of
  $l_p$-degree interpolation might be less than $l_\infty$-interpolation. In \cite{Lloyd2,MIP}, however, the converse is demonstrated for \emph{Trefethen functions}, by reaching the same approximation rates with Euclidean $l_2$-degree interpolation as when using $l_\infty$-degree interpolation. Thus, the moderate increase of the Lebesgue constant,  see Fig.~\ref{fig:L}, seems not to be a limiting factor for Euclidean $l_2$-degree interpolation for (regular) Trefethen functions.
\end{remark}
Motivated by the classic Lebesgue inequality, see e.g. \cite{brutman2}, we estimate the approximation quality of the regression schemes proposed here.

\begin{theorem}\label{theo:APP} Let $A\subseteq \N^m$, $m\in\N$ be a downward closed set and $P_A\subseteq \R^m$ fulfill $(A1)$ from Definition~\ref{def:EA}.
  Let further $P =\{p_0,\dots,p_K\}\subseteq \Omega$, $K \geq |A|$ be any arbitrary set of data points and let
  $f : \Omega \lo \R$ be a continuous function.

    Assume that the regression matrix $R_{A,P}$ from Eq.~\eqref{eq:RA} has full $\rank\, R_{A,P} = |A|\leq |P|$ and let $Q_{f,P,A} \in \Pi_A$ be the regressor of $f$ in $P$ according to Definition~\ref{def:REG} with regression error
    $$ \mu = \|\widetilde F - F \|_\infty \,, \quad \quad \widetilde F = (Q_{f,P,A}(p_i))_{i=1,\ldots,K}\,, \,F = (f(p_i))_{i=1,\ldots,K} \in \R^K\,.$$
    Denote with $Q_{f,A} = \sum_{\alpha \in A}f(p_\alpha)L_\alpha \in \Pi_A$ the Lagrange interpolant of $f$ in $P_A$ from Eq.~\eqref{eq:LAG}.
    Then:
\begin{align}
   \|f - Q_{f,P,A}\|_{C^0(\Omega)} & \leq (1+ \Lambda(P_A) \|S_{A,P}\|_\infty)\|f - Q_{f,A}\|_{C^0(\Omega)} + \mu\Lambda(P_A) \|S_{A,P}\|_\infty \,, \label{eq:EST}
\end{align}
where $S_{A,P} \in \R^{|A|\times |P|}$ with $S_{A,P}R_{A,P}  = \mathrm{Id}_{\R^{|A| \times |A|}}$
is the \emph{Moore–Penrose pseudo-left-inverse}, see e.g., \cite{ben2003,Lloyd_Num}.
\end{theorem}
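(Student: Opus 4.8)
The plan is to treat Eq.~\eqref{eq:EST} as a Lebesgue-type inequality comparing the regressor $Q_{f,P,A}$ with the interpolant $Q_{f,A}$ via the triangle inequality $\|f - Q_{f,P,A}\|_{C^0(\Omega)} \le \|f - Q_{f,A}\|_{C^0(\Omega)} + \|Q_{f,A} - Q_{f,P,A}\|_{C^0(\Omega)}$, which already isolates the first summand of the claim. Everything then reduces to estimating the middle polynomial $Q_{f,A} - Q_{f,P,A}$, which lies in $\Pi_A$.

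First I would exploit that any $h \in \Pi_A$ is reproduced by its own Lagrange expansion, $h = \sum_{\alpha \in A} h(p_\alpha) L_\alpha$, since unisolvence under $(A1)$ makes $\{L_\alpha\}_{\alpha \in A}$ a basis with $L_\alpha(p_\beta) = \delta_{\alpha,\beta}$. Applying this to $h = Q_{f,A} - Q_{f,P,A}$ and taking sup-norms gives $\|Q_{f,A} - Q_{f,P,A}\|_{C^0(\Omega)} \le \Lambda(P_A)\,\max_{\alpha \in A}|Q_{f,A}(p_\alpha) - Q_{f,P,A}(p_\alpha)|$, where the factor $\Lambda(P_A) = \|\sum_{\alpha}|L_\alpha|\|_{C^0(\Omega)}$ comes directly from Definition~\ref{def:LEB}. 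Because $Q_{f,A}(p_\alpha) = f(p_\alpha)$ and $Q_{f,P,A}(p_\alpha) = c_\alpha$ (the Lagrange coefficient, read off from $L_\alpha(p_\beta) = \delta_{\alpha,\beta}$), the remaining quantity is exactly $\|C - F_A\|_\infty$ with $F_A = (f(p_\alpha))_{\alpha \in A}$.

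The crux is to route $C - F_A$ through the pseudo-left-inverse $S_{A,P}$ so that both the factor $\|S_{A,P}\|_\infty$ appears and the regression residual $\mu$ separates from the interpolation error. Here I would use the left-inverse identity $S_{A,P}R_{A,P} = \mathrm{Id}$ twice: on the one hand $\widetilde F = R_{A,P}C$ by definition of the fitted values, so $C = S_{A,P}\widetilde F$; on the other hand, writing $\widehat F = (Q_{f,A}(p_i))_{i=1,\ldots,K}$ for the interpolant sampled at the data points, the definition of $R_{A,P}$ together with Eq.~\eqref{eq:LAG} gives $\widehat F = R_{A,P}F_A$, hence $F_A = S_{A,P}\widehat F$. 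Subtracting yields $C - F_A = S_{A,P}(\widetilde F - \widehat F)$, and the splitting $\widetilde F - \widehat F = (\widetilde F - F) + (F - \widehat F)$ with the compatibility $\|S_{A,P}v\|_\infty \le \|S_{A,P}\|_\infty \|v\|_\infty$ of the induced matrix $\infty$-norm gives $\|C - F_A\|_\infty \le \|S_{A,P}\|_\infty(\|\widetilde F - F\|_\infty + \|F - \widehat F\|_\infty)$.

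Finally I would identify the two residuals: $\|\widetilde F - F\|_\infty = \mu$ by definition, while $\|F - \widehat F\|_\infty = \max_i |f(p_i) - Q_{f,A}(p_i)| \le \|f - Q_{f,A}\|_{C^0(\Omega)}$ since each $p_i \in \Omega$. Substituting back through the chain assembles precisely Eq.~\eqref{eq:EST}. I expect the main obstacle to be the middle step: recognising that both $C$ and $F_A$ are images under $S_{A,P}$ — of the fitted values $\widetilde F$ and of the interpolant-at-data values $\widehat F$ respectively — so that their difference factors cleanly as $S_{A,P}(\widetilde F - \widehat F)$; this is what makes the single factor $\Lambda(P_A)\|S_{A,P}\|_\infty$ multiply \emph{both} error contributions. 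The existence of $S_{A,P}$ is guaranteed by the full-rank hypothesis on $R_{A,P}$, and the only bookkeeping point is that $\widehat F = R_{A,P}F_A$ rests on $Q_{f,A}$ being the Lagrange interpolant sampled at the data points $p_i$, not at the interpolation nodes $P_A$.
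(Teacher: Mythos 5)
Your proposal is correct and follows essentially the same route as the paper's own proof: triangle inequality, reproduction of the polynomial difference $Q_{f,A}-Q_{f,P,A}\in\Pi_A$ through the Lagrange basis to pick up $\Lambda(P_A)$, and the key identities $C=S_{A,P}\widetilde F$ and $F_A=S_{A,P}\widehat F$ (your $\widehat F$ is the paper's $\mathbb{Q}_I$, your $C$ its $\mathbb{Q}_R$) followed by the split $\widetilde F-\widehat F=(\widetilde F-F)+(F-\widehat F)$. No gaps; your write-up is in fact more explicit than the paper about why $\mathbb{Q}_R=S_{A,P}\widetilde F$ and $S_{A,P}\mathbb{Q}_I=F$ hold.
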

\begin{proof} While the nodes $P_A$ are unisolvent with respect to $\Pi_A$ the interpolation operator
$$ I_{P_A} : C^0(\Omega,\R) \lo \Pi_A \,, \quad f \mapsto Q_{f,A}$$
is a linear operator with operator norm
$$\|I_{P_A}\| = \sup_{f\in C^0(\Omega,R)\,, \|f\|_{C^0(\Omega)}\leq 1} \|Q_{f,A}\|_{C^0(\Omega)} = \Lambda(P_A)$$
given by the Lebesgue constant
from Definition~\ref{def:LEB}. In particular, $I_{P_A}(Q) = Q$ holds for all polynomials $Q \in \Pi_A$.
Denote with $\mathbb{Q}_R = (Q_{f,A,P}(p_\alpha))_{\alpha \in A}\in \R^{|A|}$ the values of the regressor in the interpolation nodes $P_A$ and with
$\mathbb{Q}_I = (Q_{f,A}(p))_{p \in P}$ the values of the interpolant in the data points $P$, and with $F = (f(p_\alpha))_{\alpha \in A} \in \R^{|A|}$ the function values in the interpolation nodes. Then we observe that $\mathbb{Q}_R = S_{A,P}\widetilde F$ and
$S_{A,P}\mathbb{Q}_I =F$ and use these identities to deduce:
  \begin{align*}
     \|f- Q_{f,P,A}\|_{C^0(\Omega)} & \leq \|f - Q_{f,A}\|_{C^0(\Omega)} + \|Q_{f,A}-Q_{f,P,A}\|_{C^0(\Omega)} \\
     &\leq \|f - Q_{f,A}\|_{C^0(\Omega)} + \|I_{P_A}(Q_{f,A}-Q_{f,P,A})\|_{C^0(\Omega)} \\
      &\leq \|f - Q_{f,A}\|_{C^0(\Omega)} + \Lambda(P_A)\|S_{A,P}(\mathbb{Q}_I-\widetilde F)\|_{\infty} \\
     & \leq \|f - Q_{f,A}\|_{C^0(\Omega)} + \Lambda(P_A) \|S_{A,P}\|_\infty\big (\|F-\mathbb{Q}_I\|_{\infty} + \|\widetilde F - F\|_\infty \big) \  \\
     & \leq (1+ \Lambda(P_A) \|S_{A,P}\|_\infty)\|f - Q_{f,A}\|_{C^0(\Omega)} + \mu\Lambda(P_A) \|S_{A,P}\|_\infty \,,
  \end{align*}
  where we used $\|F -\mathbb{Q}_I\|_{\infty} \leq \|f - Q_{f,A}\|_{C^0(\Omega)}$
  for the last estimate.
\end{proof}

Theorem~\ref{theo:APP} gives rise to the following interpretation:
\begin{remark}
  While $\Lambda(P_A)\|S_{A,P}\|_\infty$ is independent of $f$ and the regression error $\mu \in \R^+$ can be measured in practice, reasonable regression errors $\mu \in \R^+$ are given whenever the 2$^\text{nd}$ term
  $ \mu\Lambda(P_A) \|S_{A,P}\|_\infty$
  is sufficiently small. Given that, the approximation quality of $Q_{f,P,A}$
  relies on the approximation error $\|f - Q_{f,A}\|_{C^0(\Omega)}$
  of the interpolant $Q_{f,A}$ of $f$ in $P_A$ in combination with the factor $\Lambda(P_A) \|S_{A,P}\|_\infty$.

 Thus, though due to Remark~\ref{rem:LEB}  the Lebesgue constant of Euclidean  degree is higher than the one given by maximum degree, i.e., $\Lambda(P_{A_{m,n,p}}) \geq  \Lambda(P_{A_{m,n,\infty}})$, $0<p<\infty$,
 the smaller norm of the pseudo-inverse $\|S_{A,_{,m,n,2},P}\|_\infty \leq \|S_{A,_{,m,n,\infty},P}\|_\infty$ causes the converse estimate for the  \emph{approximation factor}
\begin{align}
  \Lambda(P_{A_{m,n,2}}) \|S_{A,_{,m,n,2},P}\|_\infty \leq \Lambda(P_{A_{m,n,\infty}}) \|S_{A,_{,m,n,\infty},P}\|_\infty\,, \label{eq:Factor}
\end{align}
in case $P_A$ fulfils $(A2)$ from Definition~\ref{def:EA}, see Experiment,~\ref{exp:FAC} and Fig.~\ref{fig:PInv}.
\end{remark}

We summarize the insights above:
\begin{corollary}\label{cor:uni1} Let $A\subseteq \N^m$,  $m\in\N$ be a downward closed set and $P_A\subseteq \R^m$ fulfill  $(A2)$ from Definition~\ref{def:EA}.
  Given a sequence $P_n =\{p_0,\dots,p_{K_n}\}\subseteq \Omega$, $K_n \geq |A|$ of (not necessarily nested) point sets and
  a continuous function $f : \Omega \lo \R$ with fast approximation rate given by interpolation in $P_A$
  \begin{equation}
    \|f - Q_{f,A_{m,n,p}}\|_{C^0(\Omega)}  = o\big(1 +\Lambda(P_{A_{m,n,p}})\|S_{A_{m,n,p},P_n}\|_\infty\big) \,.
  \end{equation}
  Assume that the polynomial $Q_{f,P_n,A_{m,n,p}}  \in \Pi_{m,n,p}$ fitting $f$ in $P_n$ possesses fast decreasing regression errors
  $\mu_n = \|Q_{f,P_n,A_{m,n,p}}(P_n) - f(P_n)\| = o(\Lambda(P_{A_{m,n,p}})\|S_{A_{m,n,p},P_n}\|_\infty)$ then
  \begin{equation}
    Q_{f,P_n,A_{m,n,p}} \xrightarrow[n\rightarrow \infty]{} f \,, \quad \text{uniformly on}\,\,\, \Omega\,.
  \end{equation}
\end{corollary}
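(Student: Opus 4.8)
The plan is to obtain the conclusion as an immediate consequence of Theorem~\ref{theo:APP}, which already carries all the analytic content. First I would instantiate that theorem with the concrete choices $A = A_{m,n,p}$, $P = P_n$, and regression error $\mu = \mu_n$. Since $P_A$ satisfies $(A2)$ and hence $(A1)$, and the regression matrix $R_{A_{m,n,p},P_n}$ has full rank (as required for the regressor to be well-defined, which is guaranteed for $K_n \geq |A|$), estimate~\eqref{eq:EST} applies verbatim and gives
\begin{equation*}
  \|f - Q_{f,P_n,A_{m,n,p}}\|_{C^0(\Omega)} \leq \Phi_n\,\|f - Q_{f,A_{m,n,p}}\|_{C^0(\Omega)} + \mu_n\,\Psi_n\,,
\end{equation*}
where I abbreviate the approximation factor by $\Psi_n := \Lambda(P_{A_{m,n,p}})\|S_{A_{m,n,p},P_n}\|_\infty$ and set $\Phi_n := 1 + \Psi_n$.

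The remaining task is to show that both summands on the right vanish as $n \to \infty$. The first summand is exactly $\Phi_n\|f - Q_{f,A_{m,n,p}}\|_{C^0(\Omega)}$, and the fast-approximation hypothesis on the interpolant forces this product to tend to $0$: the (exponential, for Trefethen functions, cf.~Eq.~\eqref{Rate}) decay of $\|f - Q_{f,A_{m,n,p}}\|_{C^0(\Omega)}$ dominates the growth of the factor $\Phi_n$. The second summand $\mu_n\Psi_n$ vanishes by the fast-decaying-regression-error hypothesis for the same reason. Summing the two vanishing bounds yields $\|f - Q_{f,P_n,A_{m,n,p}}\|_{C^0(\Omega)} \to 0$.

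Finally, since this bound is expressed entirely in the supremum norm $\|\cdot\|_{C^0(\Omega)}$, which is independent of $n$ and of any evaluation point $x \in \Omega$, convergence of the left-hand side to $0$ is precisely the claimed uniform convergence $Q_{f,P_n,A_{m,n,p}} \to f$ on $\Omega$, completing the argument.

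I do not anticipate a deep obstacle here; the one point requiring care is reading the two asymptotic hypotheses in the direction that makes each relevant \emph{product}—error times factor—tend to zero, namely $\Phi_n\|f - Q_{f,A_{m,n,p}}\|_{C^0(\Omega)} \to 0$ and $\mu_n\Psi_n \to 0$ simultaneously, rather than the corresponding ratios. This is exactly the fast-and-stable balance in which the rapid decay of the interpolation and regression errors outpaces the (at most sub-exponential, in the Euclidean-degree case) growth of the approximation factor $\Psi_n$; everything else is routine substitution into~\eqref{eq:EST}.
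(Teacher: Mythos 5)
Your proposal is correct and matches the paper's approach exactly: the paper's own proof of this corollary is the single line ``The proof follows directly by Theorem~\ref{theo:APP}'', and your substitution of $A = A_{m,n,p}$, $P = P_n$, $\mu = \mu_n$ into Eq.~\eqref{eq:EST}, followed by the observation that both summands are vanishing products, is precisely the intended expansion of that line. Your closing caveat is also well taken: with the paper's own definition of $o(\cdot)$, the hypotheses as literally written control ratios rather than products, so they must indeed be read as ``error $= o\big(1/\text{factor}\big)$'' for the conclusion to follow --- a notational slip in the statement of the corollary, not a gap in your argument.
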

\begin{proof} The proof follows directly by Theorem~\ref{theo:APP}.
\end{proof}

By summarising this section we conclude: If $f$ can be approximated due to interpolation
with an exponential rate $\|f - Q_{f,A_{m,n,2}}\|_{C^0(\Omega)} = \Oc(\rho^{-n})$  (is a Trefethen function, \cite{MIP}), with
$$\rho^{-n} \in o\big(\Lambda(P_{A_{m,n,p}})\|S_{A_{m,n,p},P_n}\|_{\infty}\big)\,,$$
then Corollary~\ref{cor:uni1} applies for Euclidean degree regression and its superiority over total or maximum degree choices becomes evident, as it is also observable by our demonstrations in Section~\ref{sec:Num}.

In order to extend the limitations of regression schemes even further, we incorporate our insights of approximation theory into the following presentation of  an
adaptive decomposition approach.

\begin{figure}[t!]
\center
\includegraphics[scale=0.18]{./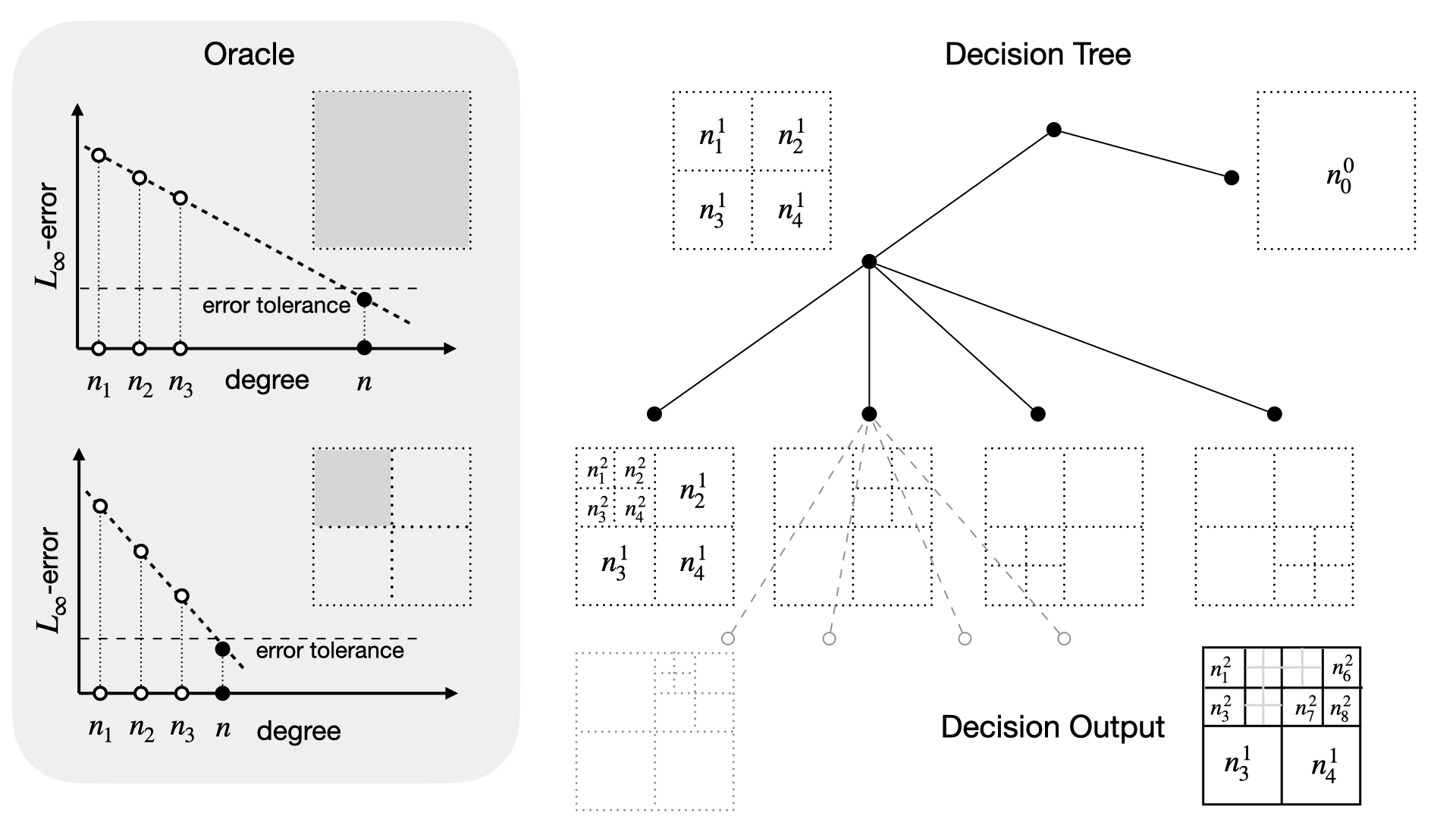}
\caption{Illustration of the oracle based adaptive domain and degree decomposition. \label{Fig:oracle}}
\end{figure}

\section{Adaptive regression of regular functions and globally merged polynomials}
\label{sec:AD}

The famous works of \cite{Boor:BS,Boor:tensorSP,Boor:SP, boorapp} address spline interpolation tasks, and rely on the fact that any sufficient regular (differentiable) function
$f: \Omega \lo R$ can be approximated by piecewise polynomial functions.

\begin{theorem}[De Boor \& H\"{o}llig] Let $f : \Omega \subseteq \R^2 \lo \R$ be a \mbox{$(n+1)$}-times continuously differentiable  function, $\Delta$ be a triangulation of $\Omega$, and
$$S_{n,\Delta} = \li\{g \in C^r(\Omega,\R) :  g_{|\delta} \in \Pi_{2,n,1}\,,  \forall\, \delta \in \Delta \re\}\,, \quad n >3r +1$$
the space of \emph{piecewise polynomial functions} of degree $n$.
Then there exists $c(\Delta) >0$ such that
 \begin{equation} \label{RC}
  \mathrm{dist}(f , S_{n,\Delta}) \leq c(\Delta) \| D^{n+1}f\|_{C^0(\Omega)}|\Delta|^{n+1}\,,
\end{equation}
where $|\Delta| = \sup_{\delta \in \Delta}\mathrm{diam}(\delta)\in \R^+$ denotes the mesh size (i.e., the maximal diameter) of the triangulation and $D^{n+1}f$ the total $(n+1)$-st derivative of the function $f$.
\end{theorem}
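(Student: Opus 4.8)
The plan is to exhibit an explicit element of $S_{n,\Delta}$ that realises the claimed rate, rather than to estimate the distance abstractly. Concretely, I would construct a linear \emph{quasi-interpolation} operator $Q : C^{n+1}(\Omega,\R) \lo S_{n,\Delta}$ that (a) reproduces polynomials of degree $n$ piecewise, $Qp = p$ for every $p \in \Pi_{2,n,1}$, (b) depends only locally on $f$ (the value of $Qf$ on a triangle $\delta$ uses $f$ only on a fixed patch of triangles around $\delta$), and (c) genuinely lands in the smoothness class $C^r$. Given such a $Q$, the trivial bound $\dist(f,S_{n,\Delta}) \leq \|f - Qf\|_{C^0(\Omega)}$ reduces the theorem to a per-triangle estimate, since $\|f-Qf\|_{C^0(\Omega)} = \max_{\delta \in \Delta}\|f - Qf\|_{C^0(\delta)}$.

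First I would settle the local estimate assuming $Q$ exists. On a single triangle $\delta$, the averaged Taylor polynomial (Sobolev representation) of $f$ of degree $n$ approximates $f$ with error $\Oc\big(\mathrm{diam}(\delta)^{n+1}\,\|D^{n+1}f\|_{C^0(\delta)}\big)$; combining this with the polynomial reproduction (a) and the locality and boundedness (b) of $Q$ through a Bramble--Hilbert argument yields a constant $c_\delta>0$ with
$$\|f - Qf\|_{C^0(\delta)} \leq c_\delta\, \mathrm{diam}(\delta)^{n+1}\,\|D^{n+1}f\|_{C^0(\delta)}\,.$$
Taking the maximum over $\delta \in \Delta$, bounding $\mathrm{diam}(\delta)\leq |\Delta|$ and $\|D^{n+1}f\|_{C^0(\delta)}\leq \|D^{n+1}f\|_{C^0(\Omega)}$, and setting $c(\Delta) = \max_{\delta \in \Delta} c_\delta$ gives exactly Eq.~\eqref{RC}.

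The genuinely hard step is the construction of $Q$ in item (c): the piecewise averaged-Taylor polynomials do not match across shared edges, so they must be corrected into a globally $C^r$ spline without destroying (a) or (b). Here I would pass to the Bernstein--B\'ezier representation of each polynomial piece, in which $C^r$-continuity across an edge, and around a common vertex, becomes a finite system of linear conditions constraining the B\'ezier coefficients that lie within distance $r$ of that edge or vertex. The hypothesis $n > 3r+1$ is precisely what guarantees that, on every triangle, the \emph{interior} domain points left unconstrained by any smoothness relation still form a set on which $\Pi_{2,n,1}$ is unisolvent; these are the degrees of freedom that allow one to enforce all cross-edge and around-vertex $C^r$ relations locally while retaining full reproduction of $\Pi_{2,n,1}$ and locality of the dependence on $f$. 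Checking that these local corrections can be made consistently around \emph{every} interior vertex --- that is, that the assembled linear system is solvable with coefficient bounds independent of $f$ --- is the main obstacle, and is the core content of the de Boor--H\"ollig construction.

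Finally I would collect constants: $c(\Delta)$ absorbs the shape-regularity of the triangulation (through the operator norms of the local value-to-B\'ezier-coefficient maps) together with the Bramble--Hilbert constant, and it is finite because $\Delta$ is a fixed finite triangulation. This establishes the existence of $c(\Delta)>0$ satisfying the stated bound.
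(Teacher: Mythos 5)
First, note that the paper does not prove this theorem at all: it is imported verbatim from the de Boor--H\"{o}llig literature (the works cited as \cite{Boor:BS,Boor:tensorSP,Boor:SP,boorapp}) and used only as motivation for the adaptive domain-decomposition scheme. So there is no in-paper argument to compare against; your proposal has to be judged as a free-standing proof sketch.

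As such, your outline correctly reproduces the standard architecture --- a local, bounded, polynomial-reproducing quasi-interpolant $Q$ into $S_{n,\Delta}$ plus a Bramble--Hilbert estimate on each triangle, with $c(\Delta)$ obtained as a maximum over the finitely many triangles --- and you correctly identify $n>3r+1$ as the threshold that makes the construction possible (for $n\leq 3r+1$ the full order $n+1$ can genuinely fail, e.g.\ for $C^1$ cubics). But there is a real gap, and you name it yourself: the existence of $Q$ satisfying (a), (b), (c) simultaneously \emph{is} the theorem. Everything you actually carry out (the reduction $\dist(f,S_{n,\Delta})\leq\|f-Qf\|_{C^0(\Omega)}$, the per-triangle Taylor estimate, the collection of constants) is routine once $Q$ is granted; what is not routine is showing that the $C^r$ smoothness conditions in Bernstein--B\'{e}zier form --- which couple coefficients across \emph{all} triangles sharing a vertex, not just across single edges --- admit a simultaneous solution by a bounded linear map that still reproduces $\Pi_{2,n,1}$ and remains local. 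Your counting remark about unconstrained interior domain points is a plausibility argument, not a proof: the vertex systems can be ill-conditioned or rank-deficient for particular edge geometries (near-collinear edges at a vertex), and establishing their uniform solvability is precisely the content of the de Boor--H\"{o}llig construction. To close the argument you would either have to carry out that construction or explicitly cite it as a black box, in which case the ``proof'' reduces to a citation --- which is, in fairness, exactly what the paper itself does.
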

Generalisations of this result to higher dimensions are formulated in a similar manner~\cite{Boor:BS,Boor:tensorSP,Boor:SP}.
While every polynomial is a Trefethen function, the statement implies that any regular function can be approximated by piecewise continuous Trefethen functions.
Motivated by this fact, we deliver an \emph{adaptive, high-order, spline-type domain decomposition algorithm} relying on the results in Section~\ref{sec:TREF}. To do so, we propose the following \emph{oracle} in order to efficiently identify an \emph{(pseudo) optimal balancing} between domain decomposition and regression-degree, illustrated in Fig.~\ref{Fig:oracle}.

\algblock{Input}{EndInput}
\algnotext{EndInput}

\algblock{Output}{EndOutput}
\algnotext{EndOutput}

\begin{algorithm}[t]
\caption{Oracle based adaptive regression}\label{alg:oracle}
\begin{algorithmic}
  \Input \quad function $f:\Omega\lo \R$, error tolerance $\ee> 0$, depth $D \in \N$
  \EndInput
  \State $(\Omega_0^0, n_0^0,K^0)  \gets (\Omega, \widetilde n,1)$ \Comment{initialise input domain with predicted degree $n $  due to Eq,~\eqref{eq:predeg}}
\For{$k=0$ to $D$}
\State $b \gets 0$
\For{$j=0$ to $K^d$}
\If{$\Omega^{k-1}_j \neq \emptyset$}
\State $\{\Omega^{k}_j\}\gets \mathrm{half}(\Omega^{k-1}_j)$ \Comment{creating all subdomains by halving $\Omega_j^{k-1}$}
\State $\{n^{k}_j\}_{1\leq j \leq 2^m} \gets \mathrm{argmin}_{l \in \N} \{q_{j,\mu}(l) < \ee \}$ \Comment{predicting degree on all subdomains}
\State $\mathrm{oracle} \gets \mathrm{oracle}(f,\Omega_j^{k-1},\{\Omega_j^{k}\}_{1\leq j \leq 2^m},\ee)$ \Comment{ask oracle for  decision (Eq.~\eqref{eq:oracle})}
\If {$\mathrm{oracle} = 1$}
\State $(\Omega_j^{k-1},n_j^{k-1},K^{k-1}) \gets (\Omega_{j}^k, n_{j}^k, K^{k-1} + 2^m)$
\State $b \gets 1$
\Else
\State $(\Omega_j^{k})  \gets (\emptyset,k)$ \Comment{store decomposition depth $k$}
\EndIf
\EndIf
\EndFor
\If {$b =0$ {\bf and} error tolerance $\ee$ is achieved}
\State break \Comment{stop decomposition}
\EndIf
\EndFor
\Output \quad $(\Omega_j^{k},n_j^{k})$
\EndOutput
\end{algorithmic}
\end{algorithm}

\begin{definition}[Oracle and decision tree] Given a regular function $f : \Omega\subseteq \R^m \lo \R$, $m \in \N$ sampled on a dataset $\emptyset \neq P\subseteq \Omega$, a decomposition of $\Omega = \Omega_1 \cup \dots \cup \Omega_{2^m}$ by halving into sub-hypercubes, and let $P_{A_j}$, $A_{m,n_j,p}$
be the unisolvent nodes rescaled to the subdomains $\Omega_j$, $j=1,\dots,2^m$. By applying Newton-Lagrange regression (Definition~\ref{def:REG}), we compute the regressor  $Q_{f,A_{m,k,p},P}^0$ on $\Omega$, and the regressors  $Q_{f,A_{m,k,p},P}^j$ fitting $f$ in $P\cap \Omega_j$ and evaluate their regression errors for a sequence of degree choices $k\in \N$.
Given an error tolerance $\ee >0$, we fit the regression errors $\mu_{j,k}$ accordingly to the model $q_\mu (k) \approx c\rho^{-k}$ (with $c,\rho$ unknown) and predict the minimum polynomial degree
\begin{equation}\label{eq:predeg}
n_j  = \mathrm{argmin}_{k \in \N} \{q_\mu(k) <\ee\}
\end{equation}
reaching the required accuracy.
The option with the simpler polynomial model is defined as the oracle choice
\begin{equation}\label{eq:oracle}
  \mathrm{oracle} = \li\{\begin{array}{cc}  0 & \,, \quad \text{if} \quad |A_{m,n_0,p}| \leq \sum_{j=1}^{2^m} |A_{m,n_j,p}| \\
                                            1 & \,, \quad \text{if} \quad |A_{m,n_0,p}| > \sum_{j=1}^{2^m} |A_{m,n_j,p}|
  \end{array}\re.\,.
\end{equation}
As formalised in Algorithm~\ref{alg:oracle}, recursion  reaches an (pseudo) optimal decomposition choice, whereas the required error tolerance $\ee$ is verified for each subdomain and the decomposition is repeated in the case of violation.
\label{def:orac}
\end{definition}
Once given the piecewise  polynomials fitting the function $f : \Omega \lo \R$,  we propose to apply Lagrange interpolation in order to derive the following global polynomial approximation.
\begin{definition}[Globally merged polynomial]
\label{def:MERGE}
Let the assumptions of Definition~\ref{def:orac} be satisfied and denote with $Q_{f,A_{m,k,p},P}^j$ the piecewise polynomial fits of $f$ on the subdomains $\Omega_j^{k}$. Given global
unisolvent nodes  $P_{A_{m,n,p}}\subseteq \Omega$ satisfying $(A2)$ from Definition~\ref{def:EA}, we observe
$P_{A_{m,n,p}} \cap \partial\Omega_j^{k} = \emptyset$, $\forall j,k \in \N$ and
 define the following \emph{globally merged polynomial}
\begin{equation}\label{eq:merge}
    Q_{\mathrm{merge},A_{m,n,p}} = \sum_{\alpha \in A_{m,n,p}} Q_{f,A_{m,k,p},P}^j(p_\alpha)L_\alpha\,, \quad \text{with} \,\,\, p_{\alpha} \in  \Omega_j^{k}
\end{equation}
due to Lagrange interpolation, Eq.~\eqref{eq:LAG}.
\end{definition}
Note that, similar to \emph{spline interpolation}, the piecewise polynomial regressors coincide in the overlapping boundaries $\partial\Omega_j^{k}$ up to the pre-defined error tolerance, while the globally merged polynomial is certainly $C^\infty$-smooth.

As introduced in Section~\ref{sec:limits}, apart from storage or runtime costs, regression schemes are limited by
their stability. In contrast, interpolation in the proposed unisolvent nodes $P_A$ applies for a larger range of approximation tasks \cite{MIP}, extending the  practical limits of multivariate polynomial regression when applied as proposed above.
We evaluate this concept as part of the numerical experiments given in the next section.

 \begin{figure}[t!]
\center
\includegraphics[width=0.8\textwidth]{./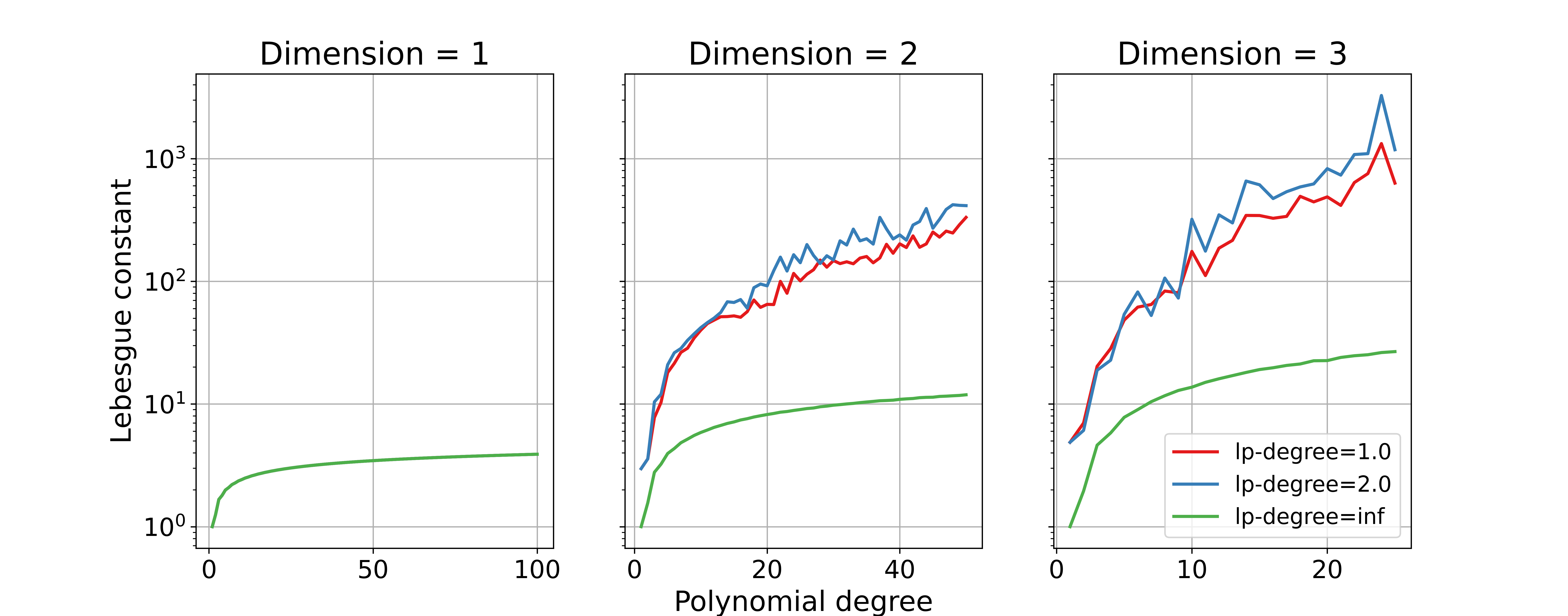}
\caption{Lebesgue constants of the proposed unisolvent nodes, with respect to the total, Euclidean, and maximum $l_p$-degree, fulfilling
  $(A2)$ from Definition~\ref{def:EA} in dimensions $m =1$ (left), $m=2$ (middle), $m=3$ (right). \label{fig:L}}
\end{figure}

\section{Numerical Experiments}\label{sec:Num}

We implemented the present Newton-Lagrange regression approach, Section~\ref{sec:NL_REG},
in {\sc Python} as part of the package {\sc minterpy} \cite{minterpy}.
The following experiments are designed for validating our theoretical findings, Theorem~\ref{theo:APP}, and proposed strategies for extending the capability of fast function approximations. By crosschecking with the classic \emph{Chebyshev regression scheme} \cite{chebfun,Lloyd} in Experiment~\ref{exp:REG}, we obtain indistinguishable results, see also Fig.~\ref{fig:FIT1}. This  validates our approach to be well-posed and realised. Consequently, we focus on the article's scope, investigating the performance for different $l_p$-degree choices and refer to \cite{chebfun,Lloyd} for further comparison with other regression schemes.

All numerical experiments were run on a standard Linux laptop (Intel(R) Core(TM) i7-1065G7 CPU @1.30GHz, 32\,GB RAM), using the standard {\sc scipy} least square solver {\em lstsq} (version 1.9.1)  for executing the regression tasks within seconds.

\begin{experiment}[Lebesgue constant] We compare the Lebesgue constant $\Lambda(P_{A_{m,n,p}})$, Definition~\ref{def:LEB} for the unisolvent nodes fulfilling $(A2)$ from Definition~\ref{def:EA} with respect to $l_p$-degrees $p =1,2,\infty$ in dimensions $m =1,2,3$. To do so, we sample $10.000$ random points $P \subseteq \Omega$ from $\Omega=[-1,1]^m$ and measure
$$ \max_{x \in P}\sum_{\alpha\in A_{m,n,p}}|L_\alpha(x)|$$
for each setting of $n \in \N$, $p=1,2,\infty$ in the same points $P$ (for each dimension).
\label{exp:LEB}
\end{experiment}

 \begin{figure}[t!]
\center
\begin{tabular}{c}
\includegraphics[width=0.95\textwidth]{./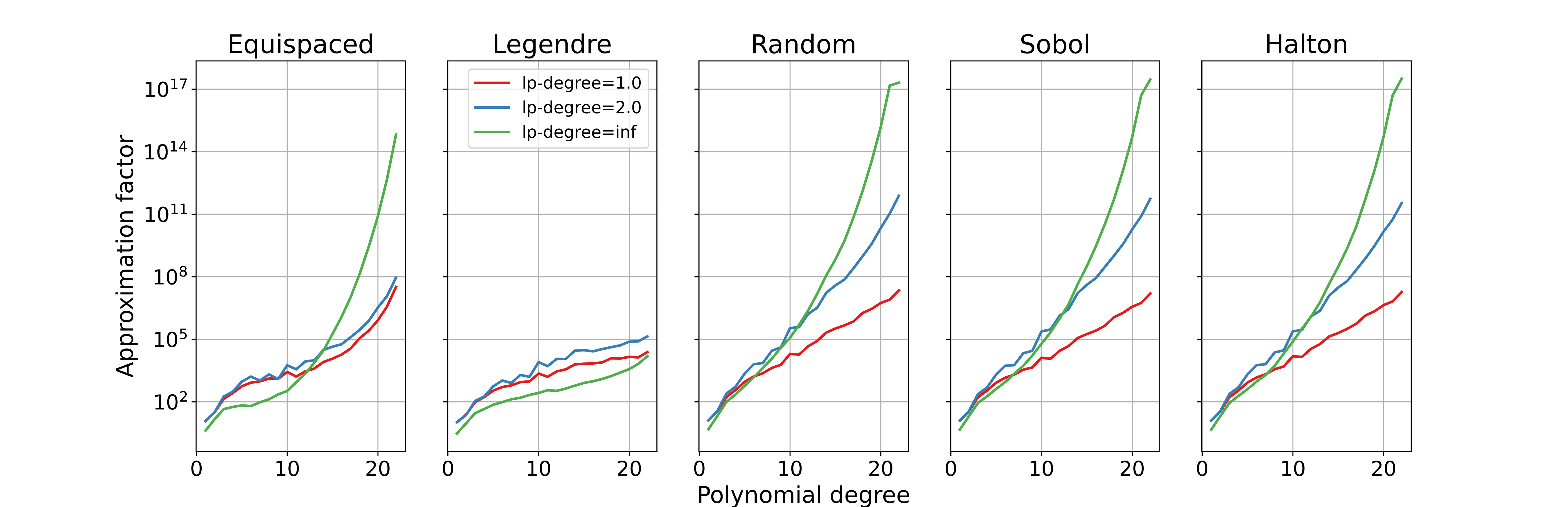}\\
\includegraphics[width=0.95\textwidth]{./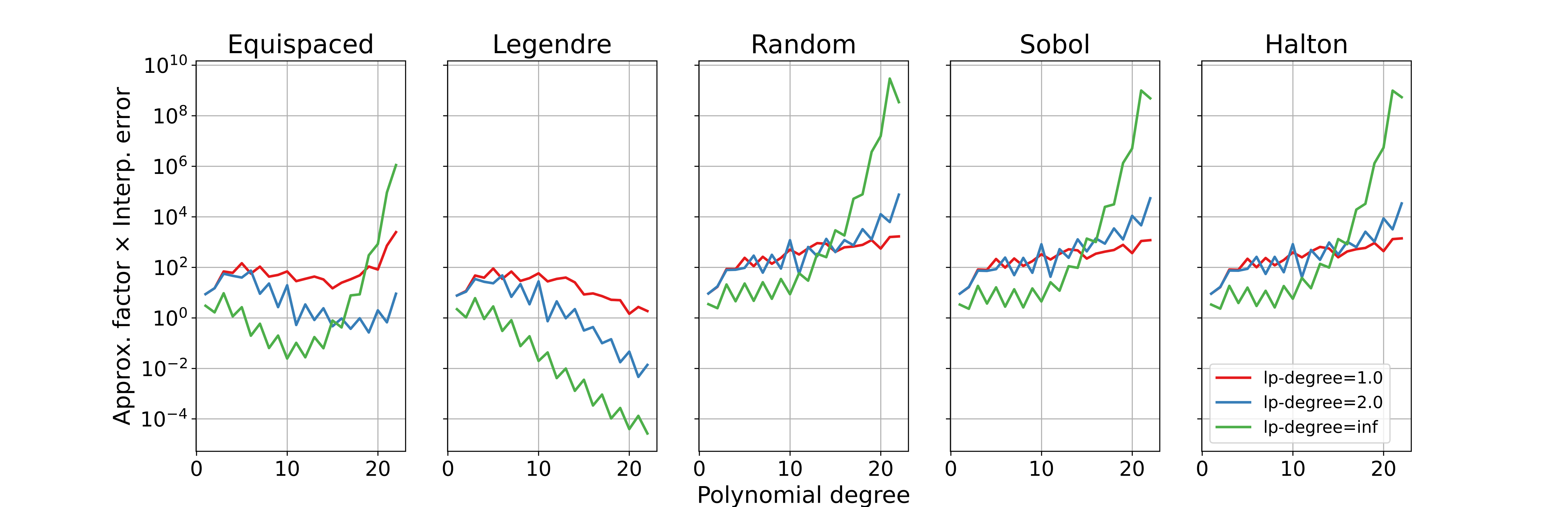}
\end{tabular}
\caption{Approximation factor for several data distributions with respect to the total, Euclidean  and  maximum $l_p$-degree in dimension  $m=3$ (first row), multiplied with the interpolation error (second row).
\label{fig:PInv}}
\end{figure}

The results are reported in Fig.~\ref{fig:L}. Note that, in dimension $m=1$ all multi-index sets coincide, i.e., $A_{1,n, p} =A_{1,n,p'}$, $p,p'>0$, resulting in only one reported Lebesgue constant for the Cheybyshev-Lobatto nodes $\Cheb_n$. We observe that the logarithmic behaviour, Eq.~\eqref{eq:LEB}, for the $l_\infty$-degree is maintained when increasing the dimension $m =2,3$,
as predicted by Lemma~\ref{lemma:log}. The Lebesgue constants for the total $l_1$-degree and the Euclidean  $l_2$-degree scale compatibly, and show a algebraic or sub-exponential growth with increasing degree $n \in \N$, as being theoretically proven in \cite{cohen3}. We observe that for degree $n=30$ in dimension $m=3$ the increase of the Lebesgue constants, compared to $l_\infty$-degree, is moderately bounded by to 2 orders of magnitude.

\begin{figure}[t!]
\center
\begin{tabular}{c}
\includegraphics[width=0.95\textwidth]{./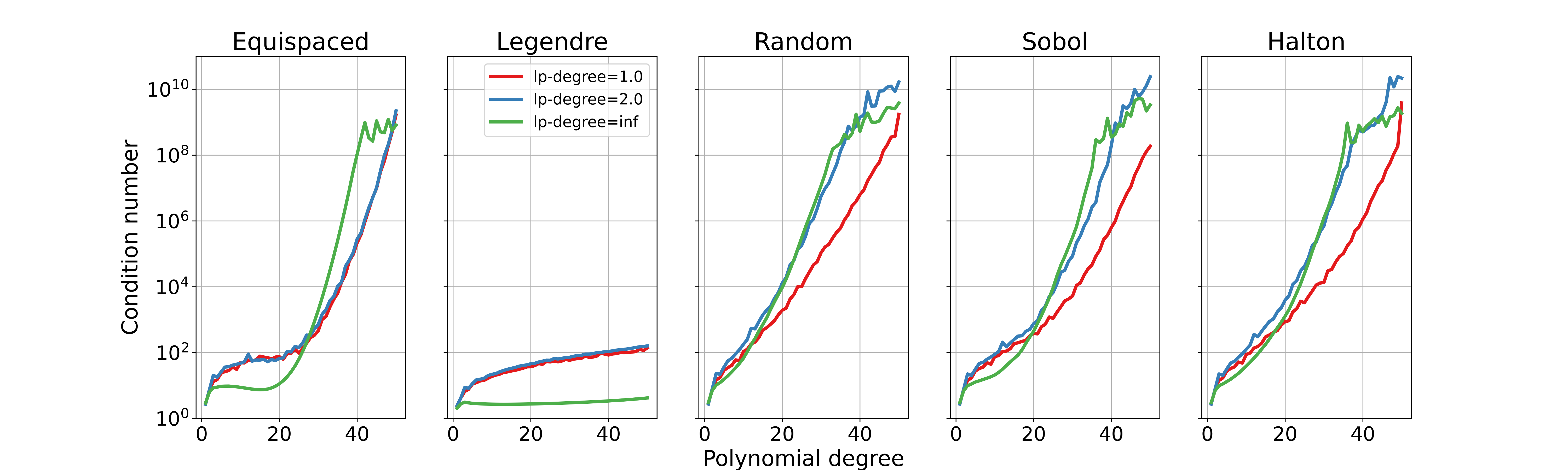}\\
\includegraphics[width=0.95\textwidth]{./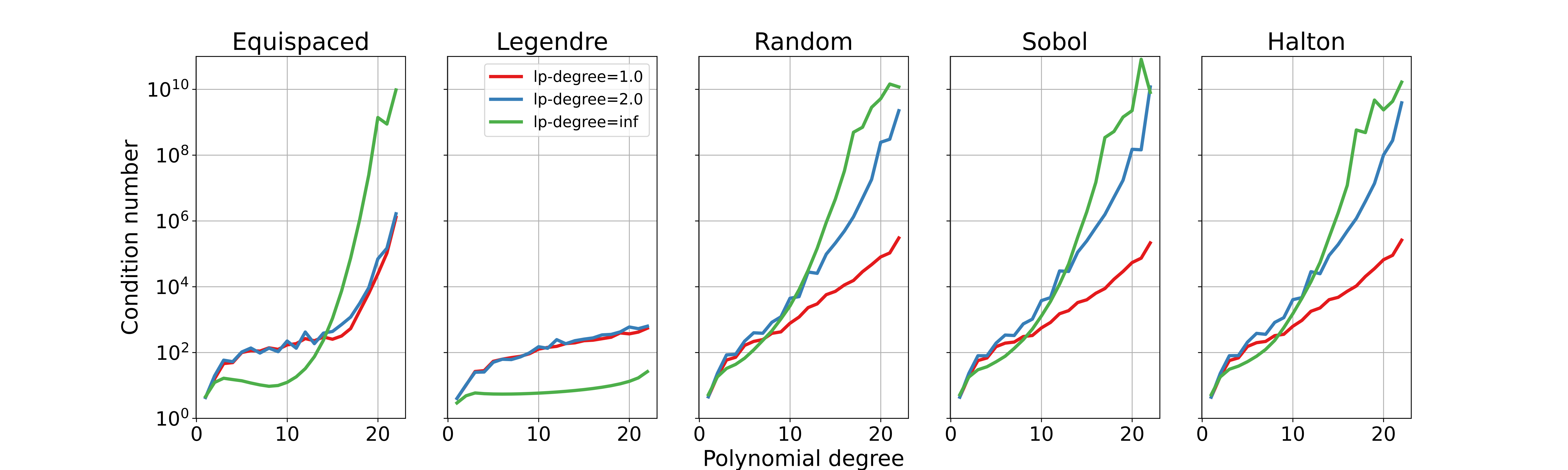}
\end{tabular}
\caption{Condition numbers of the regression matrices $R_{A,P}$, $A= A_{m,n,p}$ for several data distributions $P$ with respect to the total, Euclidean  and  maximum $l_p$-degree, $p =1,2,\infty$, in dimensions $m =2$ (first row), $m=3$ (second row). \label{fig:Cond}}
\end{figure}

\begin{figure}[ht!]
\center
\vspace{-5pt}
\begin{tabular}{cc}
\includegraphics[width=0.44\textwidth]{./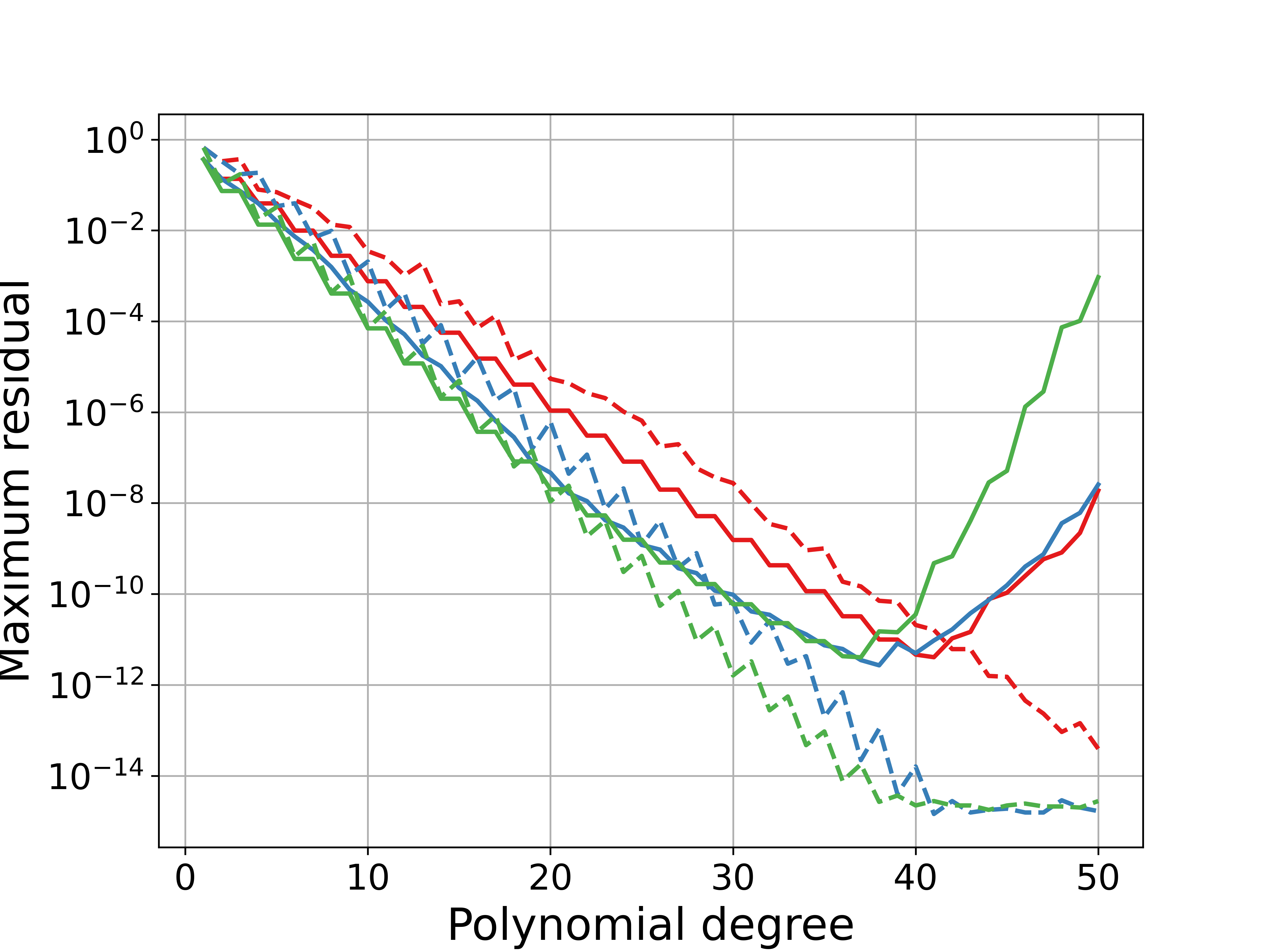}
& \includegraphics[width=0.44\textwidth]{./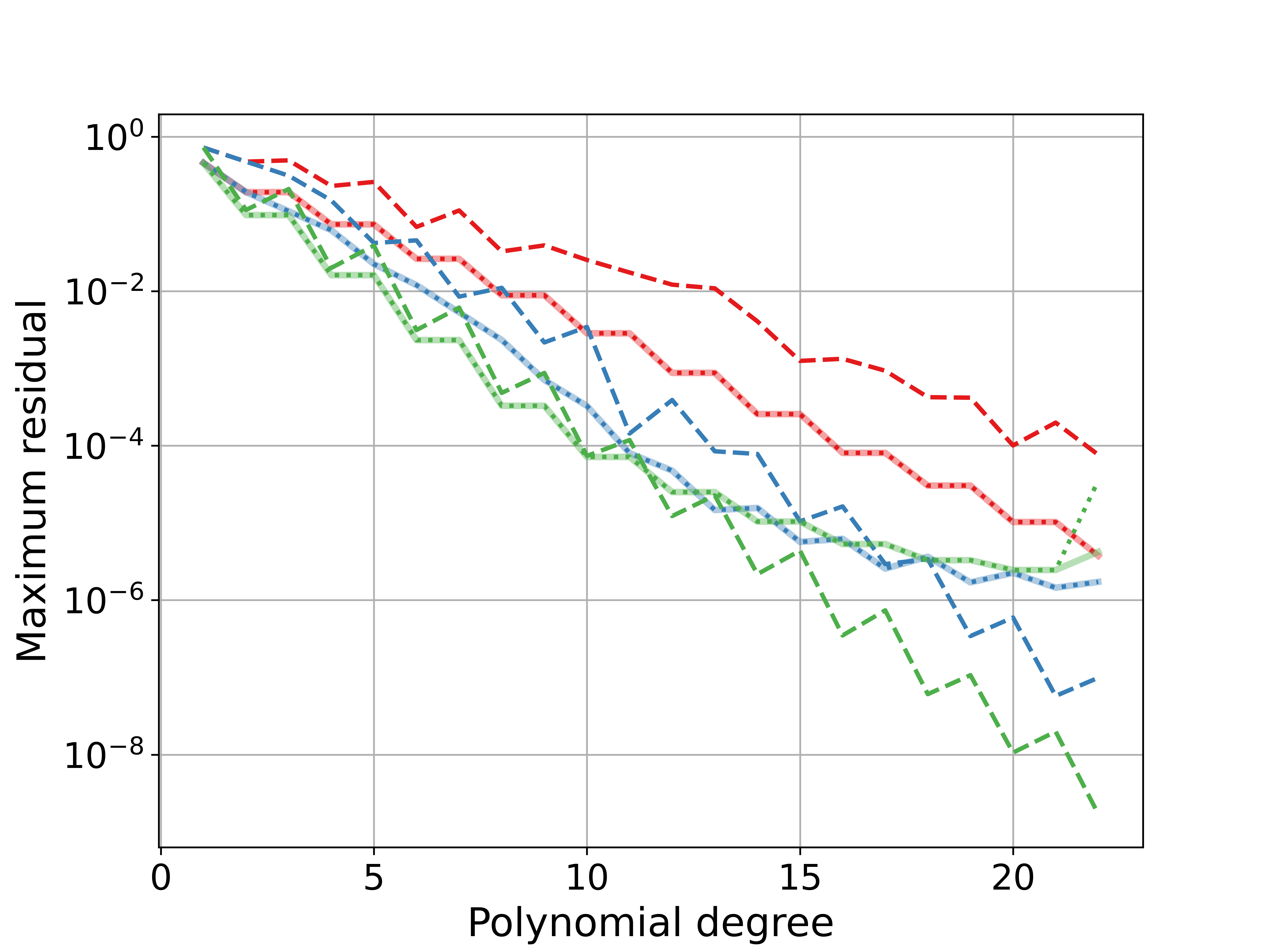}\\
\footnotesize{Equispaced 2D} & \footnotesize{Equispaced 3D}\\
\vspace{-3pt}
\includegraphics[width=0.44\textwidth]{./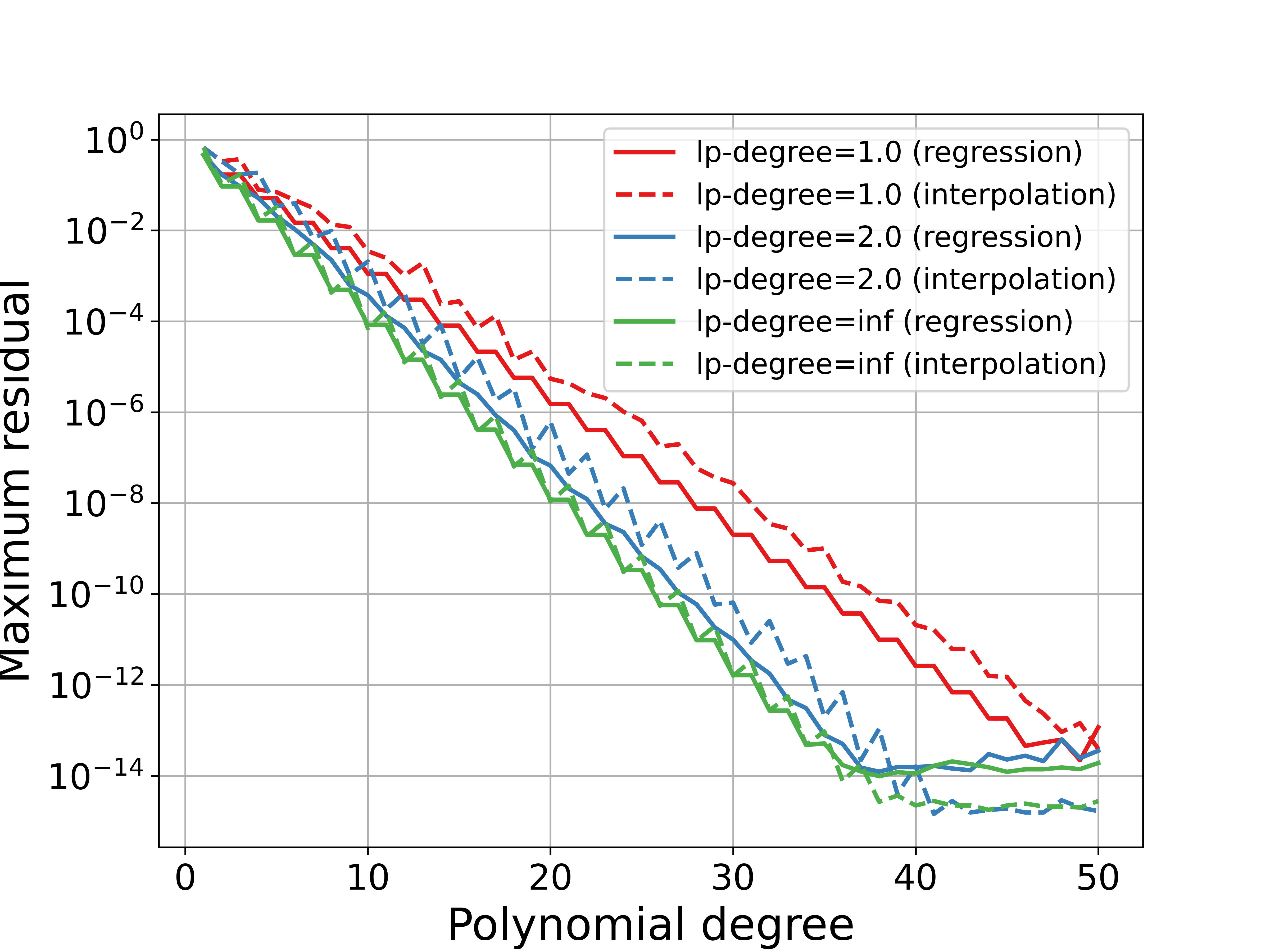}
& \includegraphics[width=0.44\textwidth]{./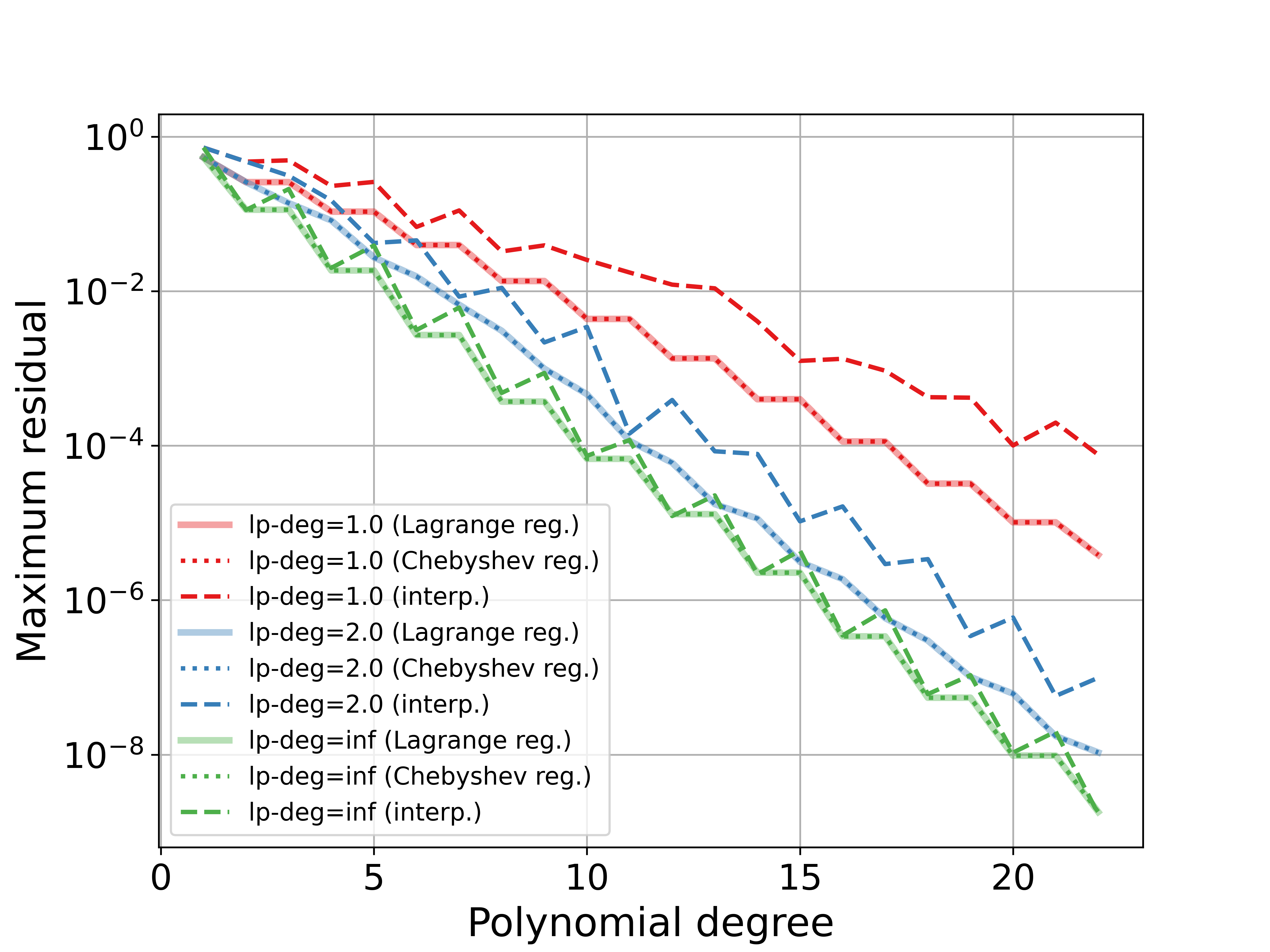}\\
\footnotesize{Legendre} 2D & \footnotesize{Legendre 3D}\\
\vspace{-3pt}
\includegraphics[width=0.44\textwidth]{./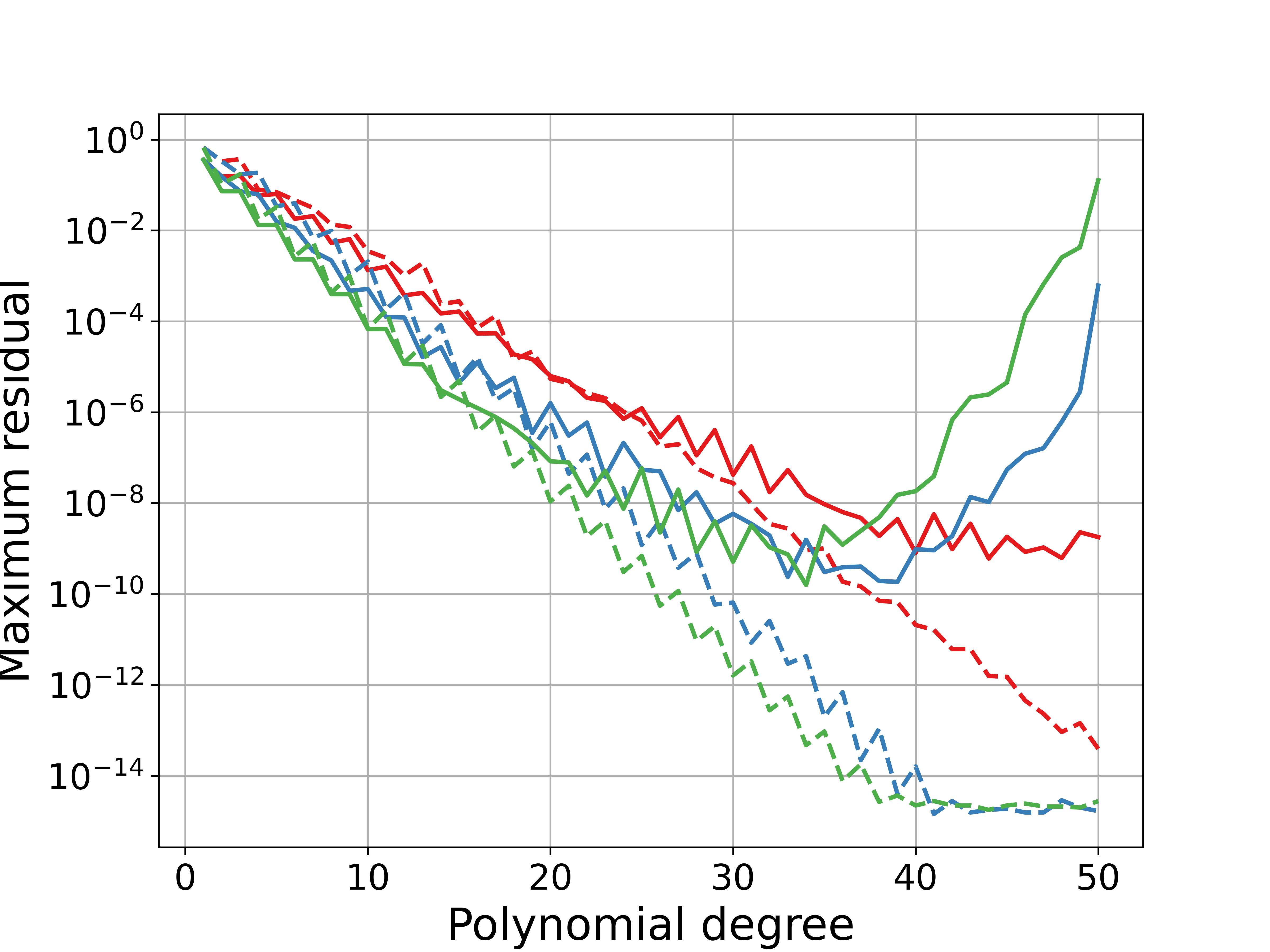}
& \includegraphics[width=0.44\textwidth]{./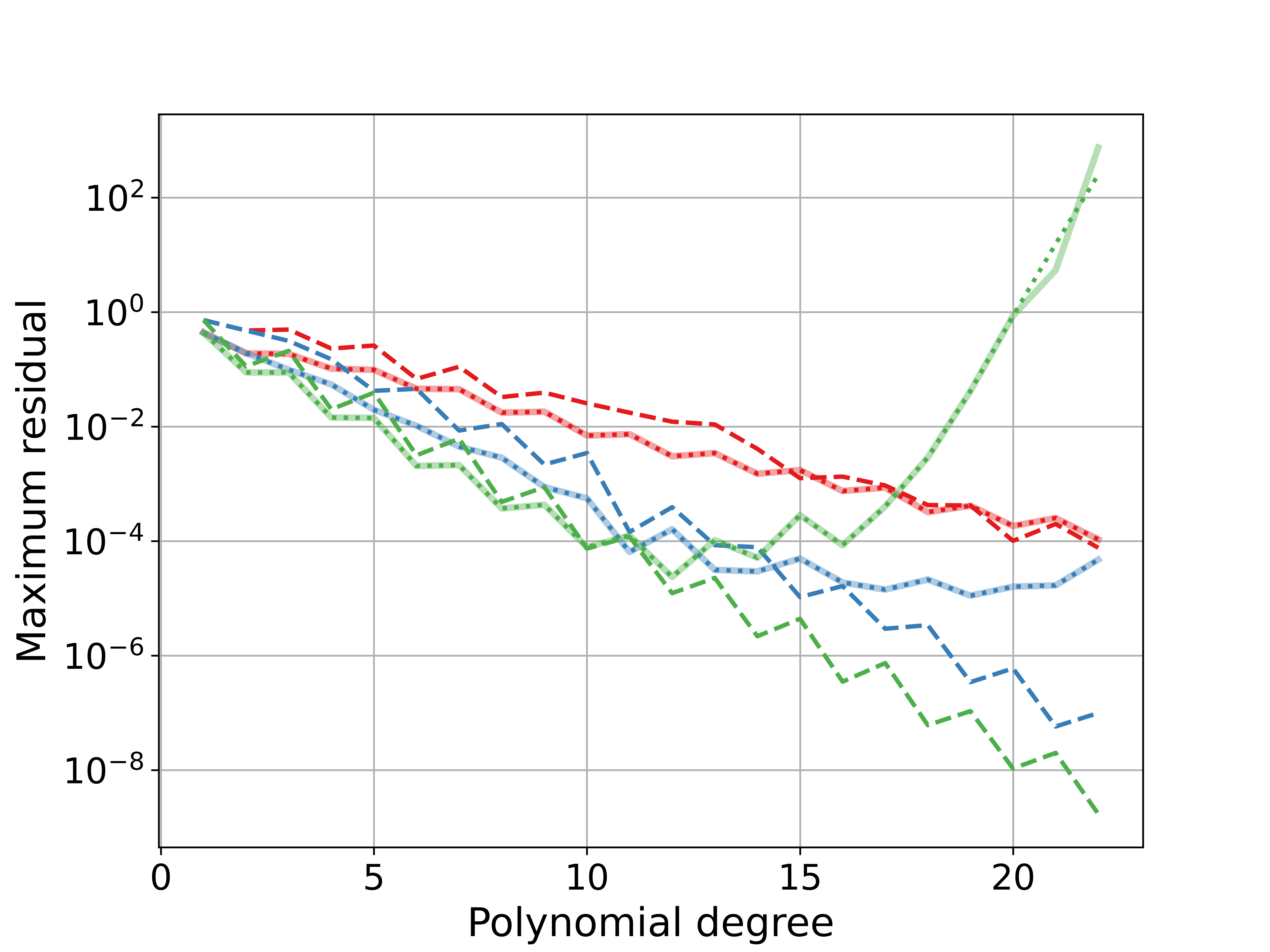}\\
\footnotesize{Random 2D} & \footnotesize{Random 3D} \\
\vspace{-3pt}
\includegraphics[width=0.44\textwidth]{./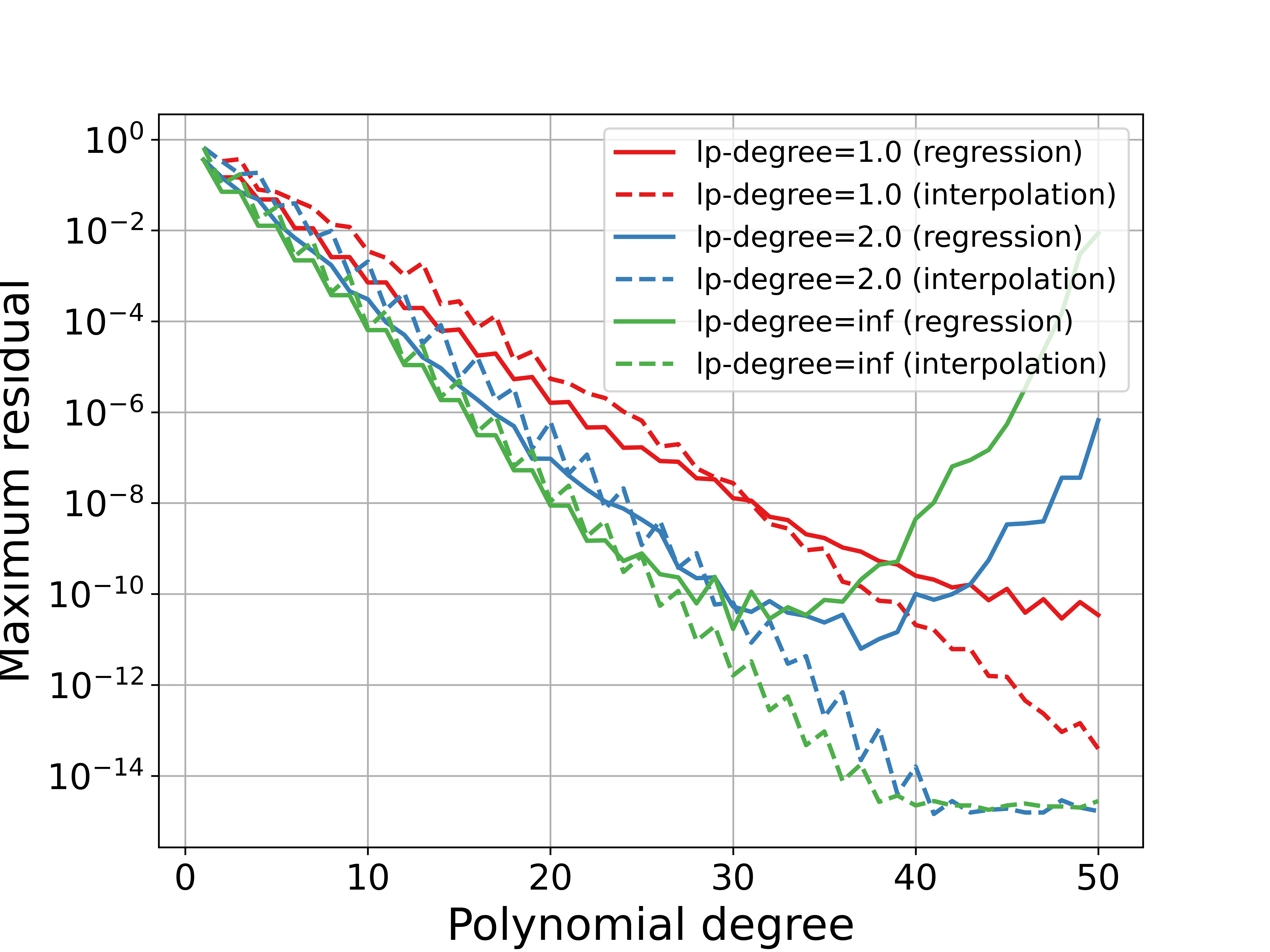}
& \includegraphics[width=0.44\textwidth]{./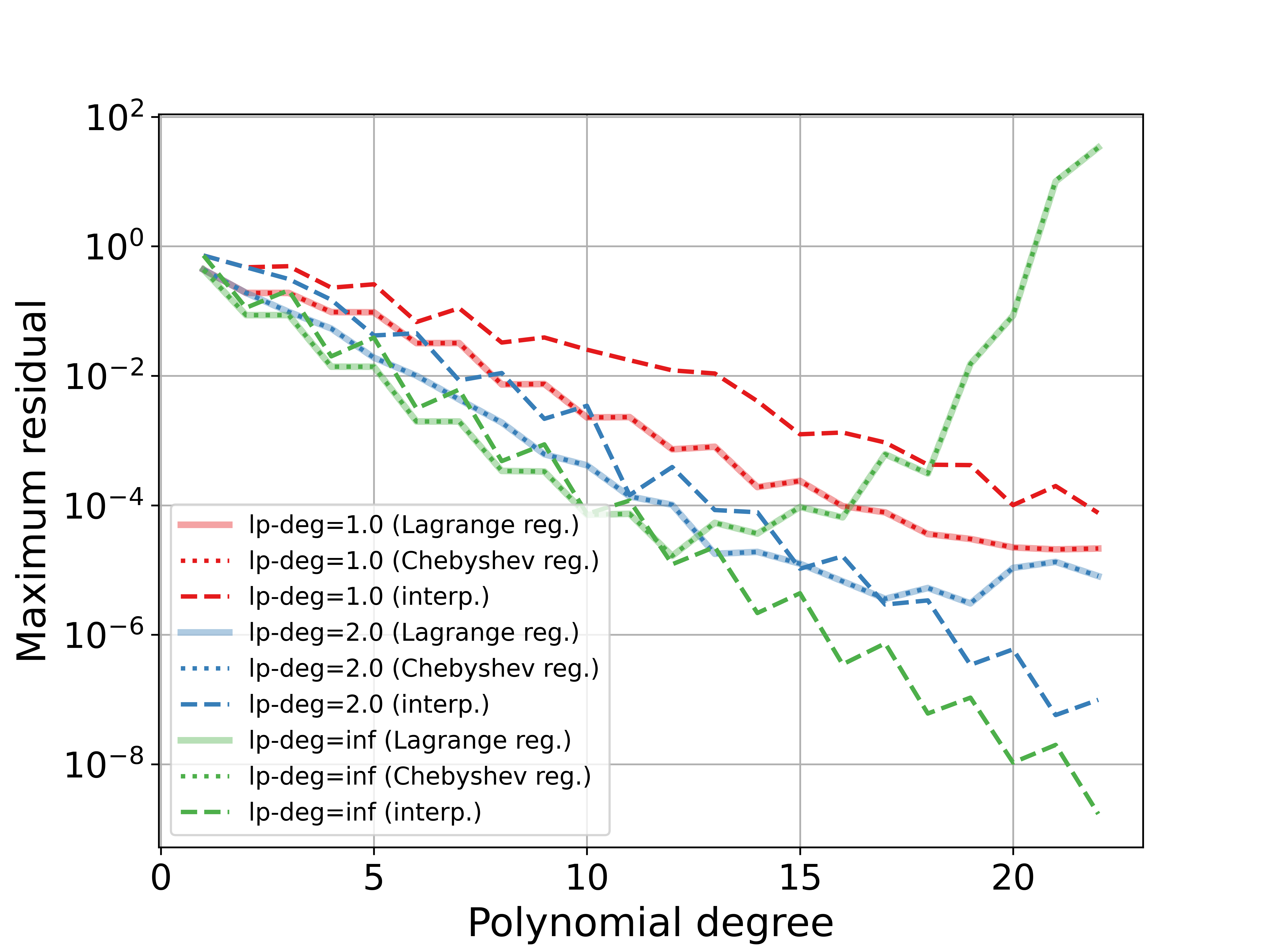}\\
\footnotesize{Sobol 2D} & \footnotesize{Sobol 3D}
\end{tabular}
\vspace{-5pt}
\caption{Newton-Lagrange and Chebyshev $l_p$-regression of the Runge function $f(x) = 1/(1 +\|x\|^2)$ in dimensions $m=2,3$. \label{fig:FIT1}}
\end{figure}

\begin{figure}[ht!]
\center
\begin{tabular}{cc}
\includegraphics[width=0.44\textwidth]{./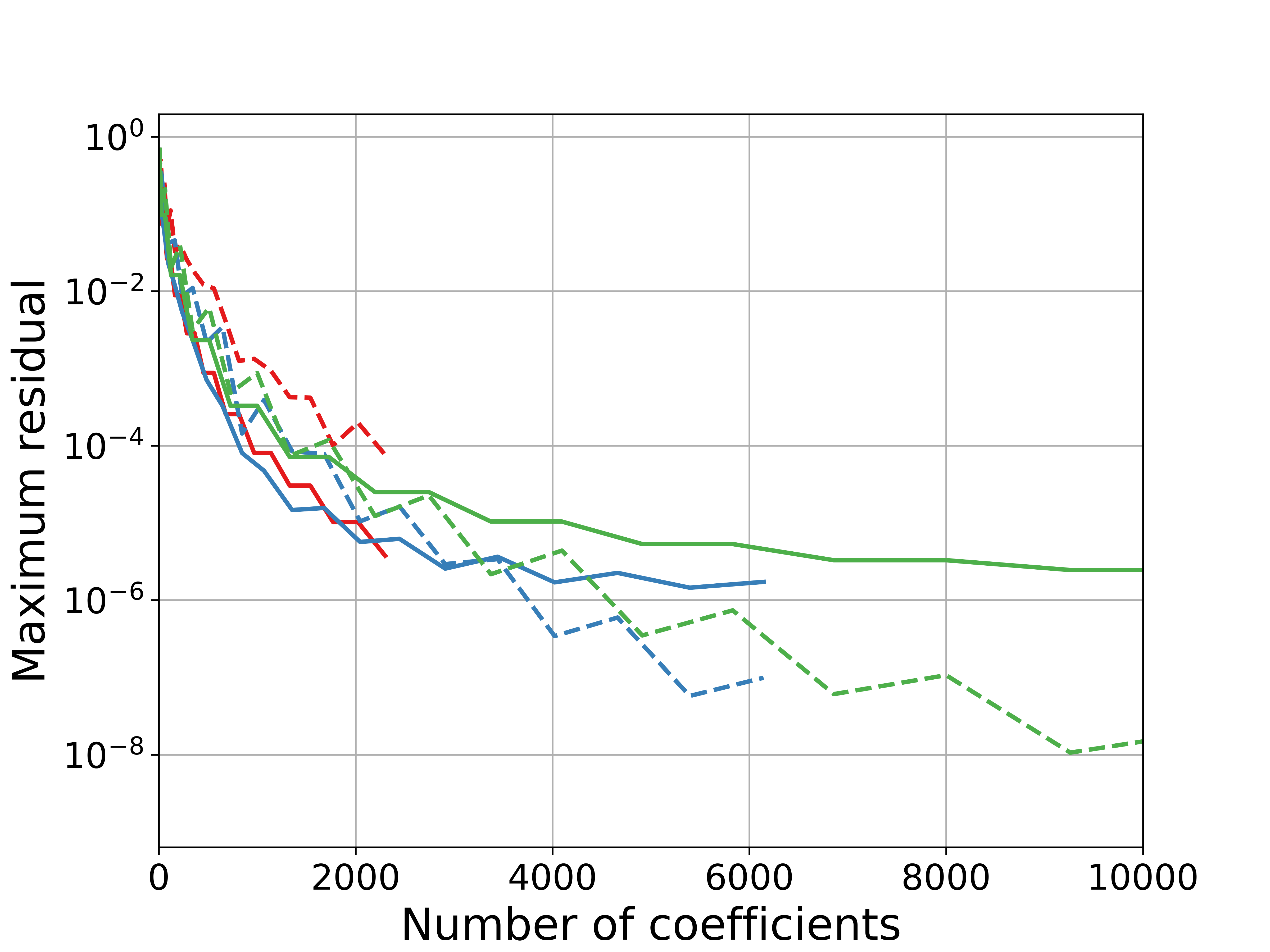}
& \includegraphics[width=0.44\textwidth]{./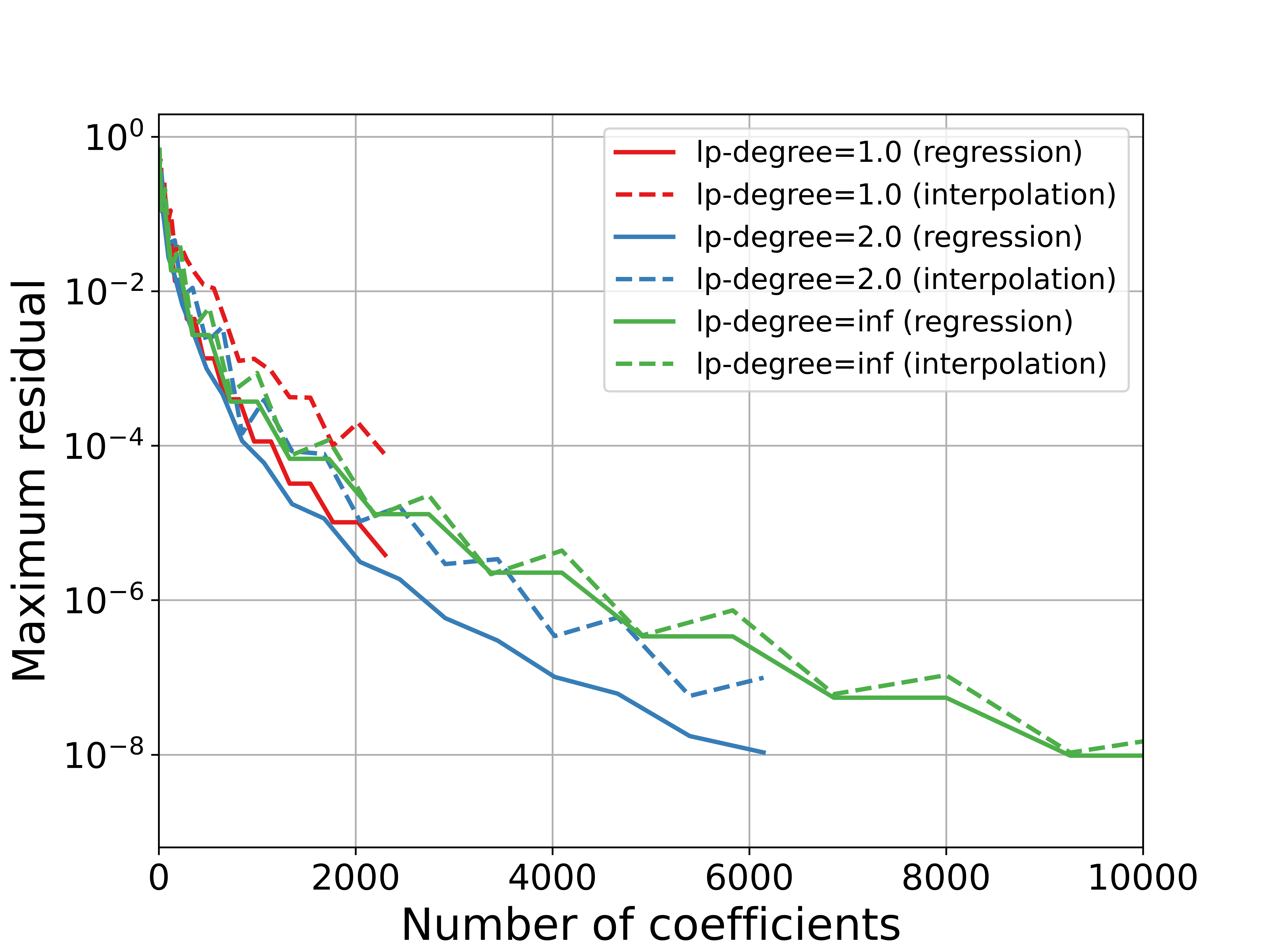}\\
\footnotesize{Equispaced 3D} & \footnotesize{Legendre 3D} \\
\includegraphics[width=0.44\textwidth]{./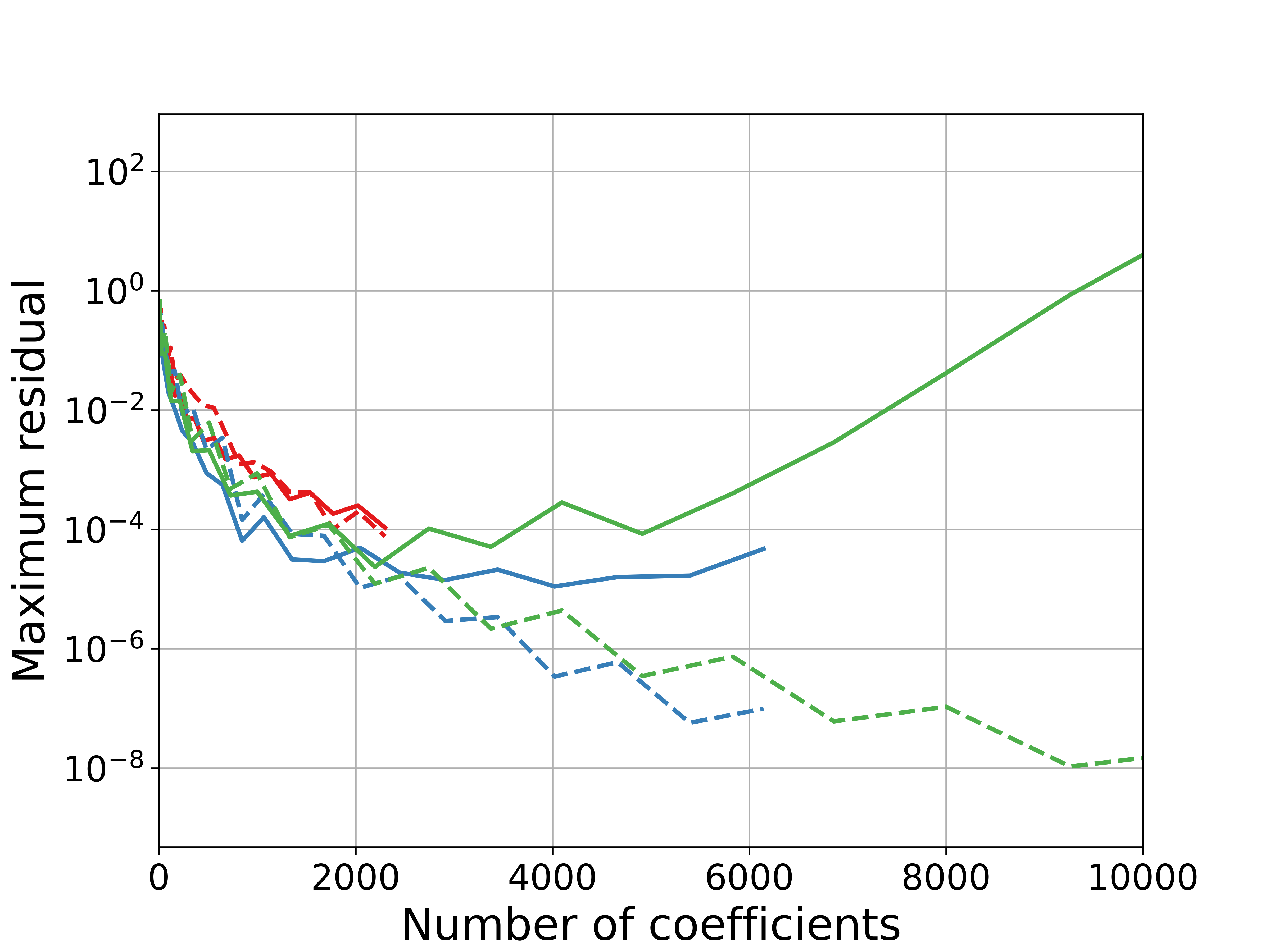}
& \includegraphics[width=0.44\textwidth]{./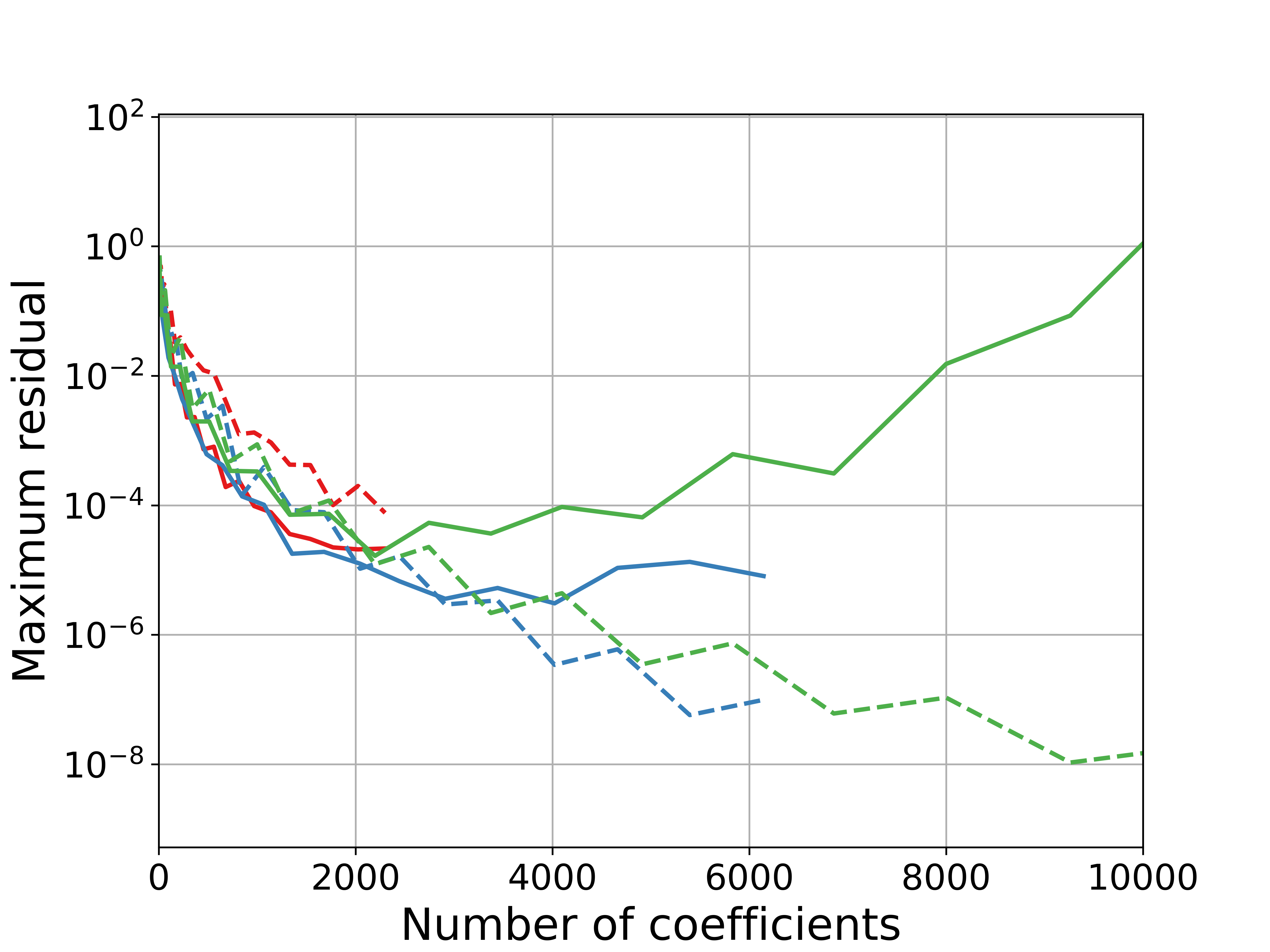}\\
\footnotesize{Random 3D} & \footnotesize{Sobol 3D}
\end{tabular}
\caption{Newton-Lagrange and Chebyshev $l_p$-regression of the Runge function $f(x) = 1/(1 +\|x\|^2)$ in equispaced , random data, and for Sobol sequences in dimensions $m=3$. \label{fig:Coeff}}
\end{figure}

As given in Theorem~\ref{theo:APP}, Eq.~\eqref{eq:EST}, and
discussed in Remark~\ref{rem:LEB}, the approximation power of the regression relies on more factors than the Lebesgue constant only. The following experiments aim to give a better overall picture in this regard.


\begin{experiment}[Approximation factor] We measure the approximation factor
  \linebreak
  $\Lambda(P_A) \|S_{A,P}\|_\infty$, $A= A_{m,n,p}$ occurring in the estimate, Eq.~\eqref{eq:EST} from Theorem~\ref{theo:APP} for equispaced grids, random points, tensorial $l_\infty$-Legendre grids, and Sobol \& Halton sequences, consisting of $256$ points in dimension $m=1$, $64^2=4.096$ points in dimension $m=2$, and $24^3=13.824$ points in dimension $m=3$, the same points for each considered instance $n\in \N$, $p=1,2,\infty$.
\label{exp:FAC}
\end{experiment}


The results are plotted in Fig.~\ref{fig:PInv} for dimension $m=3$. As one observes, the Euclidean and the total degree yield similar approximation factors than the maximum  degree for equispaced data. All choices deliver small approximation factors for the Legendre points and the $l_2,l_\infty$-degree choices perform compatibly for
the other distributions, with the $l_\infty$-regression running into instability first and the $l_1$-degree factors growing less. Contrary to Experiment~\ref{exp:LEB}, the results indicate  the Euclidean and total degree to be the better choices. Next we investigate the conditioning of the corresponding regression matrices.

\begin{figure}[ht!]
     \centering
     \begin{tabular}{cc}
       \includegraphics[width=0.44\textwidth]{./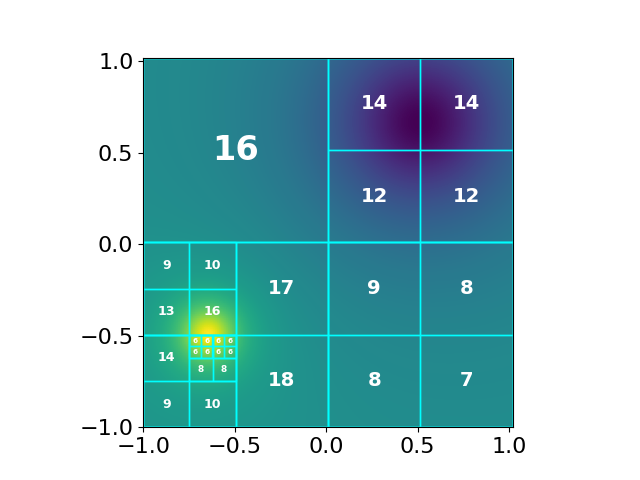}
       & \includegraphics[width=0.44\textwidth]{./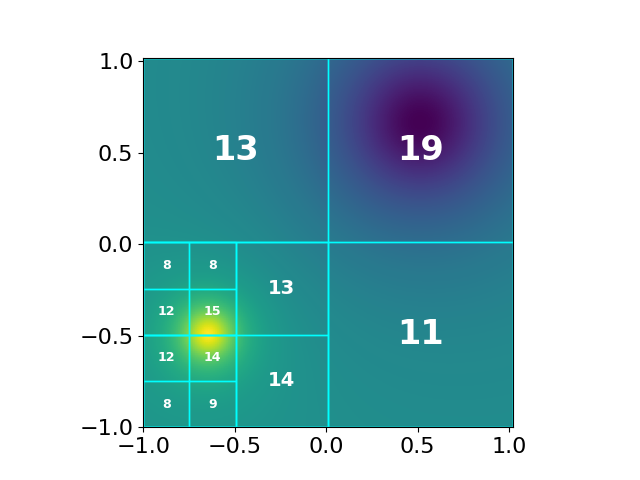}\\
       \footnotesize{Combined Runge function $l_1$-degree} & \footnotesize{Combined Runge function $l_2$-degree}\\
       \includegraphics[width=0.44\textwidth]{./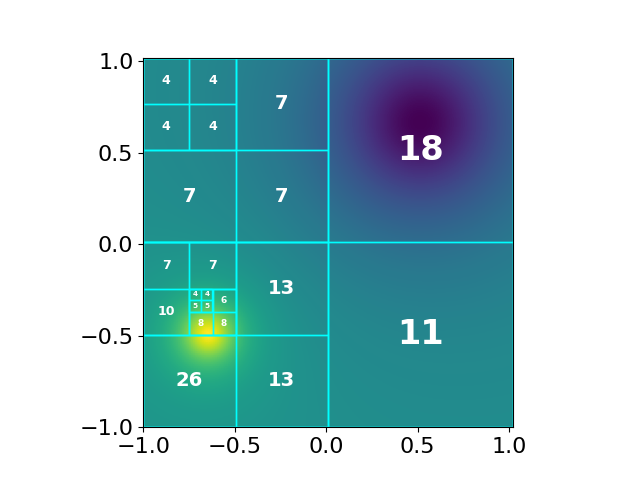}
       & \includegraphics[width=0.44\textwidth]{./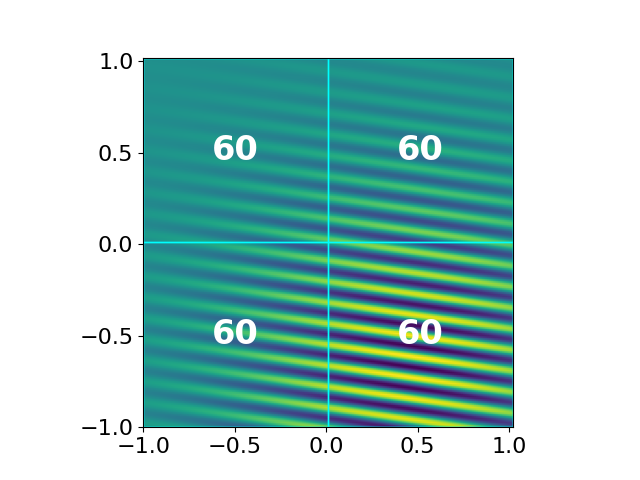}\\
       \footnotesize{Combined Runge function $l_\infty$-degree} & \footnotesize{Gaussian-sine function, all $l_p$-degrees}\\
     \includegraphics[width=0.44\textwidth]{./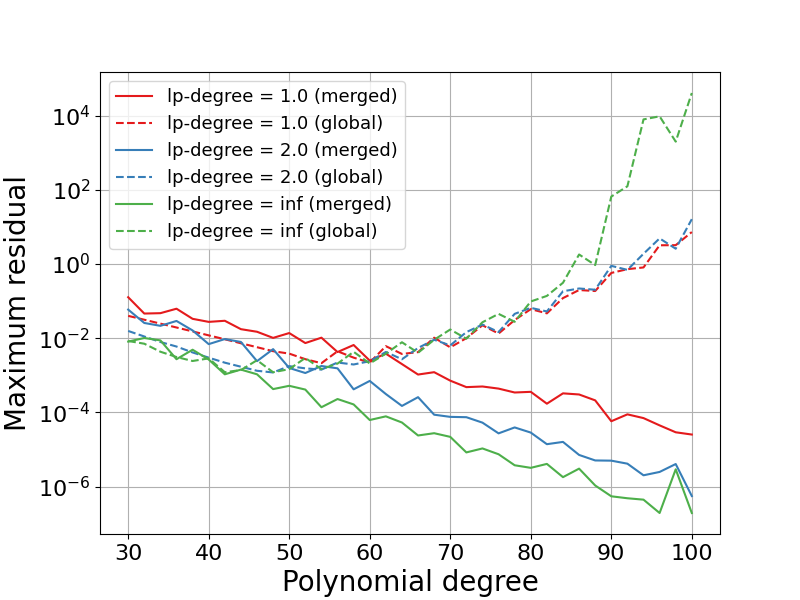}
     & \includegraphics[width=0.44\textwidth]{./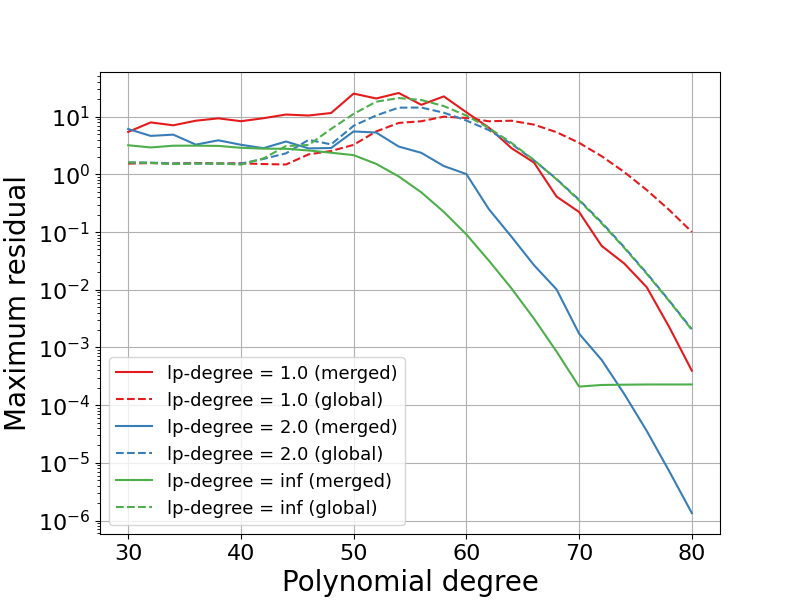}\\
     \footnotesize{Combined Runge function} & \footnotesize{Gaussian-sine function}
     \end{tabular}
        \caption{Adaptive regression: The domain decomposition proposed by the oracle for $\mathrm{F}1,\mathrm{F}2$  and an error tolerance of $10^{-7}$ (top).
        Approximation errors for global and merged regressions for $l_p$-degrees $p=1,2,\infty$ (bottom).\label{Fig:adaptive_decompostion}}
\end{figure}

\begin{experiment}[Condition numbers] We keep the experimental design of Experiment~\ref{exp:FAC} and measure the condition numbers
$$ \cond(R_A) \,, \quad A =A_{m,n,p}$$
of the regression matrices, Eq.~\eqref{eq:REG} for the corresponding setups.
\end{experiment}

The results are illustrated in Fig.~\ref{fig:Cond}. As expected all $l_p$-degree choices result in well-conditioned matrices in case the data points are given by
tensorial Legendre grids. For random, Sobol \& Halton sequences the Euclidean $l_2$-regression resists the instability phenomenon better than  $l_\infty$-regression, but total $l_1$-regression certainly results in the most stable approach. Most crucially, for equispaced data, the Euclidean and total $l_1$-degree choices result in a compatible conditioning of the regression matrices $R_{A_{m,n,p},P}$, $p=1,2,\infty$, respectively, being superior compared to choosing the maximum degree. However, the approximation power of the total degree regression is limited by the slower rate, Eq.~\eqref{Rate}, comapred the Euclidean degree.
We directly compare the potential $l_p$-degree choices to answer how the so far considered factors impact the regression schemes in total.

\begin{experiment}[$l_p$-degree regression] We directly evaluate the $l_p$-regression performance by fitting the Runge function $f(x)=1/(1+\|x\|^2)$, being sampled in equispaced grids, random points, tensorial $l_\infty$-Legendre grids, and Sobol \& Halton sequences, as in Experiment~\ref{exp:FAC}. The approximation errors are measured by generating $1000$ random points $P'$, once for each dimension $m=1,2,3$, and reporting the \emph{maximum residual}
$\max_{x \in P'}|f(x) - Q_{f,A,P}(x)|$, $A=A_{m,n,p}$.
\label{exp:REG}
\end{experiment}

Because regression for Halton sequences performs compatibly as for Sobol sequences, these results were skipped in the plots of Fig.~\ref{fig:FIT1}. Instead, we included the results of the classic Chebyshev regression in dimension $m=3$, being indistinguishable from our proposed Newton-Lagrange regression as initially claimed. The corresponding dashed lines show the approximation errors reached by $l_p$-degree interpolation, which were validated to reach the optimal Trefethen rates, Eq.~\eqref{Rate}, in our prior work \cite{MIP}.

As predicted from Eq.~\eqref{Rate}, the Euclidean and maximum degree regression reach compatibly faster approximation rates than the total $l_1$-degree regression, resulting in several orders of magnitude smaller approximation errors. However, the Euclidean degree regression is significantly more stable than the maximum degree regression, especially for random data and Sobol sequences in dimension $m=3$, Fig~\ref{fig:FIT1}.
It is surprising that all $l_p$-regressions in the tensorial Legendre nodes reach better
accuracy than $l_p$-interpolation in the (non-tensorial) Chebyshev-Lobatto grids $P_{A_{m,n,p}}\subseteq \Omega$, Fig.~\ref{fig:FIT1}.

The superiority of the Euclidean degree over maximum degree becomes even more evident, when comparing the approximation errors with respect to the number of coefficients of the regressor, given in Fig.~\ref{fig:Coeff} for dimension $m=3$. While the Euclidean degree reaches a compatible accuracy as the total degree with the same number of coefficients for equispaced data, it improves by $1\sim 2$ orders of magnitude for the other data distributions. Moreover, in combination with Fig.~\ref{fig:FIT1}, we deduce that the total degree accuracy
comes with the cost of involving monomials with higher degree than required by Euclidean degree. Consequently, $l_2$-regression delivers simpler (lower degree) polynomial models than $l_1$-regression with the same accuracy.

In summary, in most of the investigated factors, implicitly or directly impacting the regression performance, the Euclidean degree regression performed compatibly to or better than the other degree regressions. We conclude that when seeking for a universal scheme that will perform best for general regression tasks, Euclidean degree is the pivotal choice.

Nevertheless, the Euclidean degree regression is limited by the initially mentioned factors, Section~\ref{sec:limits}. In order to investigate how to resist these limitations further, we evaluate the adaptive domain decomposition from Section~\ref{sec:AD}.

\begin{experiment}[Adaptive regression]
We  execute the adaptive regression from Section~\ref{sec:AD} for three show cases:
\begin{enumerate}
  \item[F1)] The combined Runge function $$f(x) = 1 / (1 + 50||x-x_1||^2) - 1 / (1 + 5||x-x_2||^2)\,,$$
  where $x_1 = -(0.65,0.5)$ and $x_2 = (0.5,0.65)$, in dimension $m =2$.
  \item[F2)] The Gaussian-sine function  $$f(x) = \exp(-\|x-x_0\|^2)\big(\cos(\pi k(\eta\cdot x)) + \sin(\pi k (\eta\cdot x))\big)\,,$$
  where $x_0 = -(0.45,-0.65)$ $\eta = (0.10,0.70)$, and $k=25 \in \N$.
  \item[F3)] The piecewise polynomial in dimension $m=3$, given by
  $$ f(x_1,x_2,x_3) = \left\{\begin{array}{ll}
  x_1x_2^2x_3^3 & \,, x_1,x_2,x_3 \geq 0 \\
  0 & \,, \text{otherwise}
  \end{array}\right.\,.$$
\end{enumerate}
  $F1),F2)$ are sampled on an equispaced grid $G$ of resolution $200 \times 200$ and approximation errors are measured on $2.500$ random points. $F3)$ is sampled on on an equispaced grid $G$ of resolution $15 \times 15 \times 15$ and approximation errors are measured on $1.000$ random points.
\end{experiment}
\begin{table}[ht]
  \centering
\begin{tabular}{lccccc}
Function $\mathrm{F}1)$ & $l_p$-degree  & regressors  & $|C|$ & total $\mathrm{cf}$ & $\mathrm{cf}$ merging $n=50$/$100$\\
\hline
& Total         & $10$ & $2.482$ & $16$ &  ${\bf 30}$  / ${\bf 8}$\\
& Euclidean     & $13$ & ${\bf 2.322}$  & ${\bf 17}$ & $20$ / $5$\\
& Maximum       & $31$ & $3.214$ & $24$  & $15$ / $4$\\
& & & & \\
Function $\mathrm{F}2)$ & $l_p$-degree  & regressors  & $|C|$ & total $\mathrm{cf}$ &  $\mathrm{cf}$ merging $n=50$/$80$\\
\hline
& Total         & $4$ & $7.564 $ & $5$ & ${\bf 30}$  / ${\bf 12}$\\
& Euclidean     & $4$ & $11.532$  & $3$ & $20$ / $8$\\
& Maximum       & $4$ & $14.484$ & $3$  & $15$ / $6$\\
\end{tabular}
\caption{Efficiency in terms of number of coefficients $|C|$ and compression factors $\mathrm{cf}$ of the adaptive regression and the global merging for $l_p$-degrees, $p=1,2,\infty$. \label{tab:AR}}
\end{table}
For $\mathrm{F}1),\mathrm{F}2)$,  Fig.~\ref{Fig:adaptive_decompostion}~(top) shows the adaptive domain decomposition proposed by the oracle with respect to the chosen $l_p$-degree, Definition~\ref{def:orac}, when demanding an error tolerance of $10^{-7}$ (for both functions). While all $l_p$-degree choices delivered the same decomposition for $\mathrm{F}2)$ it is only reported once.

In the bottum the maximum residuals for the global $l_p$-regressions, Eq.~\eqref{eq:REG}, and the merged $l_p$-degree polynomials, Definition~\ref{def:MERGE}, are plotted.
In case of $\mathrm{F}1)$, for all $l_p$-degree choices, the global regressions converge to an approximation error of $10^{-3}$ at best and run into instability for degrees $n \geq 50$,
reflecting the rapidly growing condition numbers in that range.
In contrast, the globally merged polynomials with respect to $l_2,l_\infty$-degree achieve approximations of up to $10^{-6}$ accuracy without showing any unstable behaviour.

In case of $\mathrm{F}2)$, global $l_1$-degree regression reaches approximation error of $10^{-1}$, while $l_2,l_\infty$-degree choices perform indistinguishable with approximation error of
$10^{-3}$ for $n=80$. In contrast, Euclidean degree merging outperforms the total and maximum degree choices by reaching $10^{-6}$ approximation accuracy compared to $10^{-4}$ for the latter choices.

Table~\ref{tab:AR} reports
the  number of individual regressors,  the  number of coefficients $|C|$ (the total sum) and the compression factor  $\mathrm{cf}= 40.000 / |C|$ for the decomposition and for the globally merged polynomials with $n=50$, $n=80$, $p=1,2,\infty$.
The superiority of the Euclidean degree decomposition in terms of compression becomes obvious for $\mathrm{F}1)$, while $l_1$-degree performs better in the case of $\mathrm{F}2)$, and when used for global merging.
However, given that especially in the $\mathrm{F}2)$-case, the approximation accuracy of $l_1$-degree regression is by far non-compatible with the Euclidean choice, which is, thus, the most efficient choice.

Fig.~\ref{fig:3d} illustrates the results of the function $\mathrm{F}3)$ and gives further evidence on that conclusion. While $\mathrm{F}3$ is sampled on a $15 \times 15 \times 15$ equidistant grid all global regressions are limited by a maximum degree $n=14$. However, best approximation, $10^{-2}$ error, is reached for degree $n=10$ and all
$l_p$-regressions start to become unstable for $n >10$.  In contrast, global merging is is applicable and stabel for higher degrees, improving up to
$10^{-4}$ approximation error for $l_2,l_\infty$-degree choices. However, Euclidean degree delivers once more better compression performance than choosing maximum degree for adaptivle decomposing the fitting task with error tolerance $10^{-10}$.

\begin{figure}[t!]
    \centering
    \begin{subfigure}[t]{.5\textwidth}
        \centering
    \includegraphics[width=1.0\textwidth]{./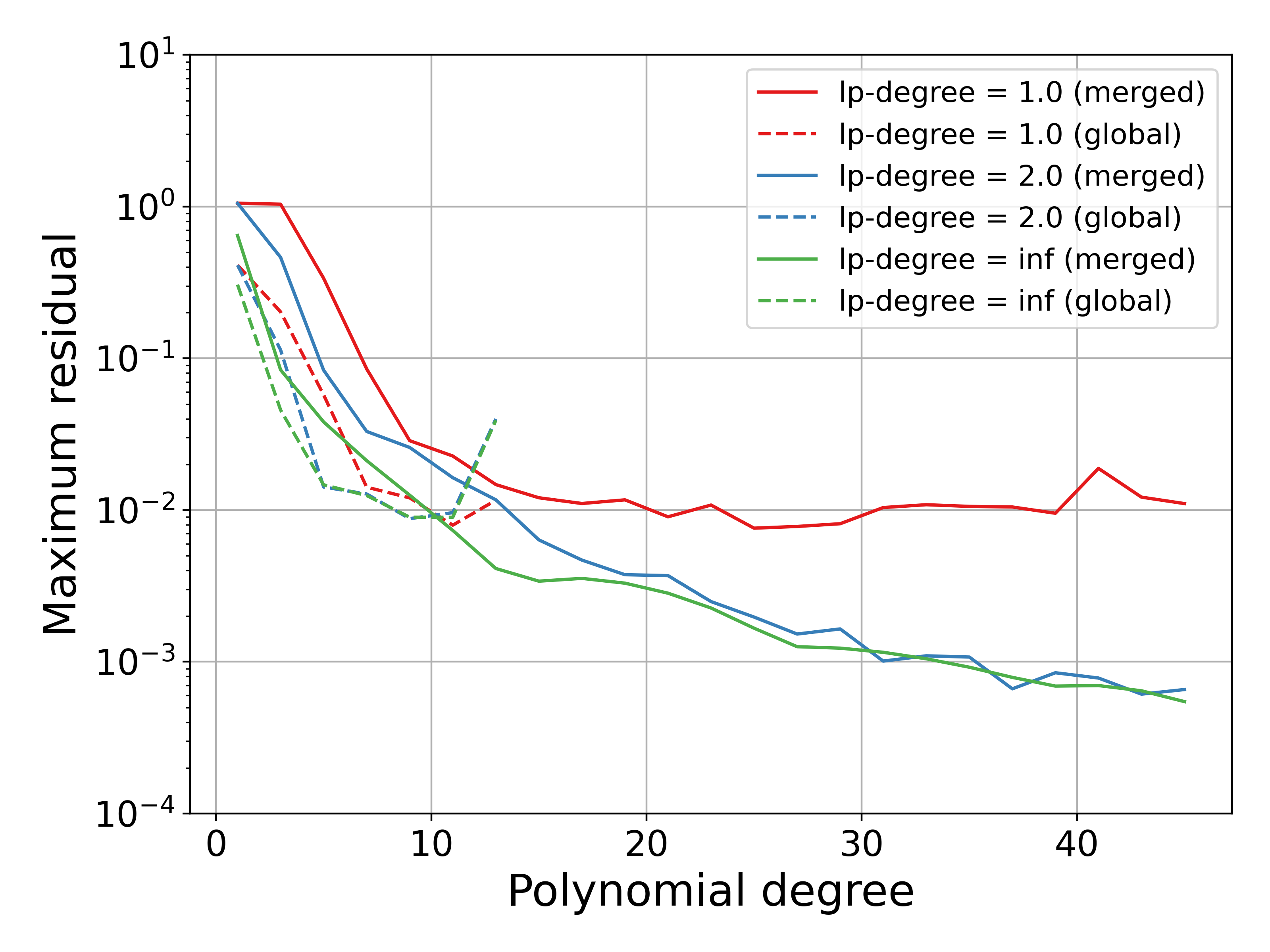}
    \end{subfigure}%
    \begin{subfigure}[t]{.5\textwidth}
        \centering
        \begin{tabular}{lcc}
     $l_p$-degree  & regressors  & $|C|$ \\
        \hline
         Total         & $36$ & $2.626$  \\
         Euclidean     & $57$ & ${\bf 2.373}$  \\
         Maximum       & $8$  & $2.617$ \\
         & & \\
         & & \\
         & & \\
         & & \\
         & & \\
         & & \\
         & & \\
         & & \\
        \end{tabular}
    \end{subfigure}
    \vspace{-75pt}
    \caption{Adaptive regression: Approximation errors for $l_p$-degree regression based on the domain decomposition proposed by the oracle
    for $\mathrm{F}3)$ and an error tolerance of $10^{-10}$, $p=1,2,\infty$ (left). Compression efficiency in terms of total number of coefficients $|C|$ (right). \label{fig:3d}}
\end{figure}

We summerize these and our prior findings by giving the following concluding thoughts.

\section{Conclusion}
We have proposed general $l_p$-degree Newton-Lagrange regression
\linebreak schemes resting on our former work, addressing multivariate interpolation tasks in non-
\linebreak tensorial nodes.
We theoretically argued and empirically demonstrated that in most aspects the choice of the Euclidean $l_2$-degree is superior to the total or maximum degree choices
for regression tasks of regular Trefethen functions. We further suggest to use the concept to extend the limitations for fast function approximations by applying an adaptive domain decompositions approach and afterwards globally merging the individual regressors due to $l_p$-degree interpolation. As the results suggest we presume that the presented concepts can impact
established strategies addressing multivariate function approximation tasks, as initially discussed.

%

\bibliographystyle{siamplain}
\bibliography{/Users/admin_hecht93/Documents/REF/Ref.bib}

\end{document}